\numberwithin{equation}{section}
\journal{??}
\begin{document}

\newtheorem{definition}{Definition}
\newtheorem{lemma}{Lemma}
\newtheorem{remark}{Remark}
\newtheorem{theorem}{Theorem}
\newtheorem{proposition}{Proposition}
\newtheorem{assumption}{Assumption}
\newtheorem{example}{Example}
\newtheorem{corollary}{Corollary}
\def\e{\varepsilon}
\def\Rn{\mathbb{R}^{n}}
\def\Rm{\mathbb{R}^{m}}
\def\E{\mathbb{E}}
\def\hte{\bar\theta}
\def\cC{{\mathcal C}}
\numberwithin{equation}{section}

\begin{frontmatter}

\title{{\bf  The persistence of synchronization under $\alpha$-stable noise
}\footnote{{\it AMS Subject Classification(2010):} .}}
\author{\centerline{\bf Yanjie Zhang$^{a,}\footnote{ zhangyj18@scut.edu.cn }$,
Li Lin ${b,*}\footnote{corresponding author: linli@hust.edu.cn }$,
Jinqiao Duan$^{d,}\footnote{duan@iit.edu}$ and
Hongbo Fu$^{e,}\footnote{hbfu@wtu.edu.cn}$}
\centerline{${}^a$ School of Mathematics }
\centerline{South China University of Technology, Guangzhou 510641,  China}
\centerline{${}^b$ Center for Mathematical Sciences }
\centerline{Huazhong University of Science and Technology, Wuhan 430074,  China}
\centerline{${}^d$ Department of Applied Mathematics,} \centerline{Illinois Institute of Technology, Chicago, IL 60616, USA}
\centerline{${}^e$ College of Mathematics and Computer Science,} \centerline{Wuhan Textile University, Wuhan, 430073, PR China}}

\begin{abstract}
This work is about the synchronization of nonlinear coupled dynamical systems driven by $\alpha$-stable noise. Firstly, we provide a novel technique to construct the relationship between synchronized system and slow-fast system.  Secondly, we show that the slow component of original systems converges to the mild solution of the averaging equation under $L^{p}(1<p<\alpha)$ sense. Finally, using the results of averaging principle for stochastic dynamical system with two-time scales, we show that the synchronization effect is persisted  provided equilibria are replaced by stationary random solutions.

\end{abstract}

\begin{keyword}
 Synchronization, $\alpha$-stable noise, averaging principle, random attractor.
\end{keyword}

\end{frontmatter}


\section{Introduction}
Synchronization of coupled dissipative systems is a well known phenomenon in biology \cite{sv}, physics \cite{TP1} and social science \cite{S.}. It illustrates that the coupled dynamical systems have a common dynamical behavior in an asymptotic sense. Rodrigues and his coauthors \cite{2,3} investigated mathematically the autonomous systems, including asymptotically stable equilibria and general attractors. They not only showed that the coupled trajectories converged to each other as time increases but also obtained the global attractor of the coupled system. For nonautonomous dynamical system, Afraimovich \cite{1} proved the coupled trajectories converged to each other  with increasing time. Kloeden \cite{4} proved that the coupled trajectories converged to each other as time increased for sufficiently large coupling coefficient and also that the component sets of the pullback attractor of the coupled system converged upper semi-continuously under a uniform global dissipativity condition.

Howerver, nonlinear dynamical systems are subjected to the effect of random fluctuations. The influence of Gaussian noise for synchronization of dissipative dynamical systems has been studied. The random attractors and stochastic stationary solutions were proposed instead of their deterministic counterparts. Caraballo et.al \cite{pe, tc08, tc07} showed that the synchronization of dissipative system persisted when they were disturbed by additive or multiplicative Gaussian noise. Limiting properties of the global random attractor were established as the thinness parameter of the domain $\varepsilon\rightarrow 0$. Flandoli  et.al \cite{ff1, ff2} provided sufficient conditions for synchronization by noise. They proved the existence of a weak point attractor consisting of a single random point for random dynamical systems or order-preserving random dynamical systems. Li et.al \cite{li} presented the convergence rate of synchronization for stochastic differential equations with  nonlinearity multiplicative noise in the mean square sense. Recently, the influence of non-Gaussian noise for synchronization of dissipative dynamical systems has been studied. Liu  et.al \cite{liu} studied the synchronization of dissipative dynamical systems driven by $\alpha$-stable noises, i.e.,
\begin{equation}
\begin{aligned}
dX_t&=\left(f(X_t)+\lambda (\Upsilon_t-X_t)\right)dt+a dL^{1}_t, \\
d\Upsilon_t&=\left(g(X_t)+\lambda (X_t-\Upsilon_t)\right)dt+b dL^{2}_t, \\
\end{aligned}
\end{equation}
where  $a$, $b$  are constant vectors with no components equal to zero, $L^{1}_t$ ,$L^{2}_t$  are independent two-sided scalar L\'evy motion.
Because the integral $\int^{t}_{-\infty}e^{-\lambda(t-s)}dL^{\alpha}_t$ wasn't pathwisely uniformly bounded for $\lambda >1$ on finite time interval $[T_1, T_2]$ (see \cite{ep}),  the random compact absorbing balls weren't contained in the common compact ball. The random attractor was not composed of a singleton set formed by a stationary orbit, so the synchronization phenomenon didn't persist under pathwise sense. Hence, the motivation of this paper is to propose a new and effective method and proves the synchronization phenomenon persisting. Comparing with the known results, the main difficulties here are how to clearly and naturally explain the `averaged' stochastic differential equation and illustrate the persistence of synchronization under $\alpha$-stable noise. In order to overcome these difficulties, we provide a novelty method to study the synchronization problems. By constructing the equivalent relationship between synchronized system and slow-fast system, we transform the problem of synchronization persistence of coupled dynamical systems into discussing the relationship between the stationary solution of two-time scales and stationary solution of `averaged' stochastic dynamical system.

 The theory of averaging principle has a long history in multiscale problem, which was first studied by Khasminskii \cite{kr}, some authors did some generalizations \cite{gj,va,wl}. However, most of the know results in the literature mainly studied the case of Gaussian noise.  Recently, Bao et.al \cite{bg} established the averaging of slow-fast dynamical system driven by $\alpha$-stable noises, where the invariant measure is independent of scale parameter $\varepsilon$ and the  drift coefficient in slow component is uniformly bounded. However, in \cite{bg} it can not cover the case of invariant menasure dependent of scale parameter $\varepsilon$ and more general condition on the drift coefficient in slow component. Hence the another motivation here is how to gain the `averaged' stochastic dynamical system under the case of invariant measure dependent on $\varepsilon$ and discuss the relationship between the stationary solution of two-time scales and stationary solution of `averaged' stochastic dynamical system.

The rest of this paper is organized as follows. In Section 2,  we recall basic concepts about symmetric $\alpha$-stable L\'evy process and random dynamical systems. In Section 3,  we formulate the problem of synchronization of dissipative systems. In Section 4, we show that the slow component of original systems converges to the mild solution of the averaging equation under $L^{p}$ sense. In Section 5, we show that the synchronization effect is persisted  provided equilibria are replaced by stationary random solutions. The paper is concluded in Section 6.

Throughout this paper, generic constants will be denoted by C, whose values may change from one place to another. The constant which depends on  parameter $\gamma$ will be denoted by $C_{\gamma}$.

\section{Preliminaries}
In this section, we recall some basic definitions for symmetric $\alpha $-stable  process  \cite{da, duan} and random dynamical systems \cite{la}.
\subsection{\textbf{Symmetric $\alpha $-stable  process } }
A L\'evy process $L_t$ taking values in $\mathbb{R}^n$ is characterized by a drift vector $b \in {\mathbb{R}^n}$, an $n \times n$ non-negative-definite, symmetric covariance matrix $ Q $ and a Borel measure $\nu$ defined on ${\mathbb{R}^n}\backslash \{ 0\} $.   We call $(b,Q,\nu)$ the generating triplet of  the L\'evy motions $L_t$  . Moreover, we have the L\'evy-It\^o decomposition for $L_t$ as follows
\begin{equation}
{L_t}= bt + B_{Q}(t) + \int_{|y|< 1} y \widetilde N(t,dy) + \int_{|y|\ge 1} y N(t,dy),
\end{equation}
where $N(dt,dy)$ is the Poisson random measure, $\widetilde N(dt,dy) = N(dt,dy) - \nu (dx)dt$ is the compensated Poisson random measure, $\nu (A) = \mathbb{E}N(1,A)$ is the jump measure, and $ B_{Q}(t)$ is an independent standard $n$-dimensional Brownian motion. The characteristic function of $L_t$ is given by
\begin{equation}
\mathbb{E}[\exp({\rm i}\langle u, L_t \rangle)]=\exp(t\rho(u)), ~~~u \in {\mathbb{R}^n},
\end{equation}
where the function $\rho:{\mathbb{R}^n}\rightarrow \mathbb{C}$ is  the characteristic exponent
\begin{equation}
\rho(u)={\rm i}\langle u, b\rangle-\frac{1}{2}\langle u, Qu \rangle+\int_{{\mathbb{R}^n}\backslash \{ 0\}}{(e^{{\rm i}\langle u, z \rangle}-1-{\rm i}\langle u,z\rangle {I_{\{ | z | \textless 1\} }})\nu(dz)}.
\end{equation}
The Borel measure $\nu $ is called the jump measure. Here  $|\cdot|$ be the Euclidean norm, $\langle \cdot, \cdot \rangle $ be the scalar product in $ \mathbb{R}^n $.
\begin{definition}
For $\alpha \in (0,2)$, an $n$-dimensional symmetric $\alpha $-stable process $ L^{\alpha}_{t} $ is a L\'evy process with characteristic exponent $\rho$
\begin{equation}
\rho(u)=-C_1(n,\alpha)| u |^{\alpha},  ~for~u \in {\mathbb{R}^{n}}
\end{equation}
with $ C_1(n, \alpha):=\pi^{-\frac{1}{2}}\Gamma((1+\alpha)/2)\Gamma(n/2)/\Gamma((n+\alpha)/2)$.
\end{definition}

For an $n$-dimensional symmetric $\alpha$-stable L\'evy process, the diffusion matrix $ Q= 0$,
the drift vector $ b= 0$, and the L\'evy measure $\nu $ is given by
\begin{equation}
\nu(du)=\frac{C(n,\alpha)}{{| u |}^{n+\alpha}}du,
\end{equation}
where $ C(n, \alpha):=\alpha\Gamma((n+\alpha)/2)/{(2^{1-\alpha}\pi^{n/2}\Gamma(1-\alpha/2))}$.
\subsection{\textbf{Random dynamical systems}}
\begin{definition}
Let $ (H,\mathscr{B}(H))$ be a measurable space. A random dynamical system over a  metric dynamical system $(\Omega,\mathscr{F},\mathbb{P},(\theta_{t})_{t\in\mathbb{R}})$ with time space $\mathbb{R^{+}}$ is given by a mapping
\begin{equation*}
    \phi: \mathbb{R^{+}}\times\Omega\times H \to H,
\end{equation*}
that is jointly $\mathcal{B}(\mathbb{R^{+}})\otimes\mathscr{F}\otimes\mathcal{B}(H)/\mathcal{B}(H)-$ measurable and satisfies
the cocycle property:
\begin{equation}
\label{cocycle}
\begin{aligned}
\phi(0,\omega,\cdot)&={\rm id}_{H}, \text{ for each}\ \omega\in\Omega, \\
\phi(t+s,\omega,\cdot)&=\phi\big(t,\theta_s\omega,\phi(s,\omega,\cdot)\big),\ \text{ for each}\ s,\,t\in\mathbb{R^{+}},\ \omega\in\Omega.
\end{aligned}
\end{equation}
\begin{definition}
A random variable $\omega\mapsto X(\omega)$ with values in $H$ is called a stationary orbit (or random
fixed point) for a random dynamical system $\phi$ if
\begin{equation}
\phi(t,\omega,X(\omega))=X(\theta_{t}\omega),\ \text{ for }\ t\in\mathbb{R^{+}},\ \omega\in\Omega.
\end{equation}
\end{definition}
\end{definition}
\begin{definition}
A random variable $X$ is called tempered if
\begin{equation}
\lim_{t\rightarrow +\infty} \frac{\log^{+}|X(\theta_{-t}\omega)|}{t}=0, ~~a.s. ~~\omega.
\end{equation}
\end{definition}
A random set $\mathcal{A}(\omega)\subseteq \mathbb{R}^{n}$ is called tempered if the random variable $\sup_{x \in \mathcal{A}(\omega)}|x|$ is tempered.

\begin{definition}
 A compact random set $\mathcal{A}\in \mathcal{D}$ is called a weak attractor of $\varphi$ if for all $\omega \in \Omega$, $t>0$ and closed tempered random set $D(\omega)\subseteq \mathbb{R}^{n}$, we have
\begin{enumerate}
\item[(i)]$\varphi\left(t, \omega ,\mathcal{A}(\omega)\right)=\mathcal{A}(\theta_{t}\omega)$;
\item[(ii)]$(l.i.p)~\lim_{t\rightarrow \infty}dist_{H}\left(\varphi\left(t,\theta_{-t}\omega, D(\theta_{-t}\omega)\right), \mathcal{A}(\omega)\right)=0$,
\end{enumerate}
where $l.i.p$ denotes limit in probability.
\end{definition}

\section{Formulation of the problem}

We formally define the synchronization for a given random dynamical system.
\begin{definition}
We say that the synchronization occurs if $\mathcal{A}(\omega)$ is a singleton, for $\mathbb{P}$-a.e. $\omega \in \Omega$.
\end{definition}
Consider the following  stochastic dynamical systems
\begin{equation}
\label{sde}
\left\{
\begin{aligned}
dX_t&=f(X_t)dt+\sigma_1dL^{\alpha}_t, \\
dY_t&=g(Y_t)dt+\sigma_2dL^{\alpha}_t, \\
\end{aligned}
\right.
\end{equation}
where $(X_t, Y_t)$ is an $\mathbb{R}^n \times \mathbb{R}^n$-valued process, $f,g$ are drift terms (vector fields), $\sigma_1$ and $\sigma_2$ are non-zero real noise intensities, and $L^{\alpha}_t$  (with $1<\alpha<2$) is symmetric $\alpha$-stable L\'evy process with triplets $(0,0,{\nu_{\alpha}})$.

The synchronized system corresponding to stochastic dynamical systems \eqref{sde} reads
\begin{equation}
\label{syn}
\left\{
\begin{aligned}
dX_t&=\big(f(X_t)+\nu(Y_t-X_t)\big)dt+\sigma_1dL^{\alpha}_t, \\
dY_t&=\big(g(Y_t)+\nu(X_t-Y_t)\big)dt+\sigma_2dL^{\alpha}_t, \\
\end{aligned}
\right.
\end{equation}
where $\nu >0$.

The aim is to show that this synchronization effect is preserved under additive $\alpha$-stable noise provided equilibria are replaced by stationary random solutions, and obtain the convergence rate of synchronization of the coupled systems. That is to say we will prove that the synchronization systems \eqref{syn} have a unique stochastic stationary solution $(\bar{X}^{\nu}_t, \bar{Y}^{\nu}_t)$, which is globally asymptotically stable with
\begin{equation}
(\bar{X}^{\nu}_t, \bar{Y}^{\nu}_t)\rightarrow (\hat{X}_t, \hat{X}_t), ~~as ~~\nu \rightarrow \infty,
\end{equation}
on finite time interval $[T_1, T_2]$ of $\mathbb{R}$, where $\hat{X}_t$ is the unique globally asymptotically stable stationary solution of the `averaged' SDE
\begin{equation}
d\bar{X}_t=\frac{1}{2}\left[f(\bar{X}_t)+g(\bar{X}_t)\right]dt +\frac{\sigma_1+\sigma_2}{2}dL^{\alpha}_{t}.
\end{equation}
In the following, we will propose a new and effective method aiming at proving the synchronization phenomenon persisting.

\section{Stochastic averaging}
Introduce the following transformation
\begin{equation}
\label{transmation}
\left\{
\begin{aligned}
X_t&=X^{\varepsilon}_t+\varepsilon^{\frac{1}{\alpha}}Y^{\varepsilon}_t, \\
Y_t&=X^{\varepsilon}_t-\varepsilon^{\frac{1}{\alpha}}Y^{\varepsilon}_t, \\
\nu&=\frac{1}{\varepsilon},
\end{aligned}
\right.
\end{equation}
then the equations \eqref{syn} can be rewritten as
\begin{subequations}
\begin{equation}
\label{slo}
dX^{\varepsilon}_t=\frac{1}{2}\left[f(X^{\varepsilon}_t+\varepsilon^{\frac{1}{\alpha}}Y^{\varepsilon}_t)
+g(X^{\varepsilon}_t-\varepsilon^{\frac{1}{\alpha}}Y^{\varepsilon}_t)\right]dt+\frac{(\sigma_1+\sigma_2)}{2}dL^{\alpha}_t,
\end{equation}
\begin{equation}
\label{sloy}
dY^{\varepsilon}_t=\frac{1}{\varepsilon}\left[\frac{1}{2}\varepsilon^{1-\frac{1}{\alpha}}\big(f(X^{\varepsilon}_t
+\varepsilon^{\frac{1}{\alpha}}Y^{\varepsilon}_t)-g(X^{\varepsilon}_t-\varepsilon^{\frac{1}{\alpha}}Y^{\varepsilon}_t)\big)\right]dt-\frac{2}{\varepsilon}Y^{\varepsilon}_tdt+\frac{(\sigma_1-\sigma_2)}{2\varepsilon^{\frac{1}{\alpha}}}dL^{\alpha}_t.
\end{equation}
\end{subequations}
Obviously, the equations \eqref{slo}-\eqref{sloy} can be viewed as the slow-fast stochastic dynamical system.

Now we impose the following assumptions on the coefficients $f, g$ for the slow-fast stochastic dynamical system \eqref{slo}-\eqref{sloy}.

{\bf Hypothesis  H.1 }(i)
The functions $f, g $, viewed as functions of $(x, y, \varepsilon)$, satisfy the global Lipschitz conditions,
i.e., there exists a positive constant $L$ such that
\begin{equation}
\begin{aligned}
\left|f(x_1, y_1,\varepsilon)-f(x_2, y_2,\varepsilon)\right|& \leq L (|x_1-x_2|+|y_1-y_2|),\\
\left|g(x_1, y_1,\varepsilon)-g(x_2, y_2,\varepsilon)\right|& \leq L (|x_1-x_2|+|y_1-y_2|).\\
\end{aligned}
\end{equation}
(ii) The functions $f, g $ satisfy the linear growth conditions,
i.e., there exists a positive constant $M_1$ such that
\begin{equation}
\begin{aligned}
\left|f(x, y,\varepsilon)\right|& \leq M_1 (1+|x|+|y|),\\
\left|g(x, y,\varepsilon)\right|& \leq M_1 (1+|x|+|y|).\\
\end{aligned}
\end{equation}
\begin{remark}
Under Hypothesis $\bf{H.1}$,  there exists a unique solution $\{(X^{\varepsilon}_t, Y^{\varepsilon}_t ), t\geq 0\}$ to system \eqref{slo}-\eqref{sloy}.
\end{remark}

\par
{\bf Hypothesis  H.2 }
There exist positive constants $M_2$ and $R$, such that for any $(x,\varepsilon)$ and $|y|\geq {R}$,
\begin{equation}
\label{expo}
\left\langle y,f(x,y,\varepsilon)-g(x,y,\varepsilon)\right\rangle \leq -M_2 | y |^{2}.
\end{equation}
\begin{remark}
The condition \eqref{expo} ensures the existence of an invariant measure  $\mu^{\varepsilon}_{x}(dy)$  for the fast component $Y^{\varepsilon}_t$ with $X^{\varepsilon}_t=x$. Moreover, this invariant measure is dependent of $\varepsilon$ (see \cite[Theorem 1.1]{wj}).
\end{remark}

\par
{\bf Hypothesis  H.3 }
There exist positive constants $M_3, M_4, M_5$ and $M_6$,  such that
\begin{equation}
\label{0b}
\begin{aligned}
&\sup_{x, y,\varepsilon}|f(x,y,\varepsilon)|\leq M_3, ~~
\sup_{x, \varepsilon}|g(x,y,\varepsilon)|\leq M_4(1+|y|), \\
&\sup_{y,\varepsilon}|\nabla_{x}f(x,y,\varepsilon)|\leq M_5,
~~\sup_{y, \varepsilon}|\nabla_{x}g(x,y,\varepsilon)|\leq M_6.
\end{aligned}
\end{equation}

Denote $F(x,y,\varepsilon):=f(x+\varepsilon^{\frac{1}{\alpha}}y)$ and $G(x,y,\varepsilon):=g(x-\varepsilon^{\frac{1}{\alpha}}y)$, then the slow-fast stochastic dynamical system \eqref{slo} and \eqref{sloy} can be written as
\begin{equation}
\label{slow1}
\left\{
\begin{aligned}
dX^{\varepsilon}_t&=\frac{1}{2}\left[F\left(X^{\varepsilon}_t, Y^{\varepsilon}_t, \varepsilon)
+G(X^{\varepsilon}_t,Y^{\varepsilon}_t, \varepsilon\right)\right]dt+\frac{\sigma_1+\sigma_2}{2}dL^{\alpha}_t, \\
dY^{\varepsilon}_t&=\frac{1}{\varepsilon}\left[\frac{1}{2}\varepsilon^{1-\frac{1}{\alpha}}\left(F\left(X^{\varepsilon}_t, Y^{\varepsilon}_t, \varepsilon\right)-G\left(X^{\varepsilon}_t,Y^{\varepsilon}_t,\varepsilon\right)\right)\right]dt-\frac{2}{\varepsilon}Y^{\varepsilon}_tdt+\frac{\sigma_1-\sigma_2}{2\varepsilon^{\frac{1}{\alpha}}}dL^{\alpha}_t.
\end{aligned}
\right.
\end{equation}

\subsection{Some priori estimates of $(X^{\varepsilon}_t, Y^{\varepsilon}_t)$}
In this subsection, we prove some uniform bounds for the moments of the solution $(X^{\varepsilon}_t, Y^{\varepsilon}_t )$.
\begin{lemma}
\label{bounded}
Under Hypotheses $\bf{H.1}$-$\bf{H.3}$, for any constant $T>0$, there exist constants $C_{T}> 0$ and $\varepsilon_0> 0$, such that  for $1<p< \alpha$ and $\varepsilon \in (0, \varepsilon_0 )$,  we have
\begin{equation}
\sup_{0\leq t\leq T}(\mathbb{E}(X^{\varepsilon}_t)^{p})^{\frac{1}{p}}\leq C_{T}(1+|x_0|),~~ \sup_{0\leq t \leq T}\left(\mathbb{E}|Y^{\varepsilon}_t|^{p}\right)^{\frac{1}{p}} \leq C_{T}(1+|y_0|).
\end{equation}
\begin{proof}
It's easy to know that $\forall$ $ 1< p < \alpha$ and $0\leq t\leq T$, we have
\begin{equation}
\label{moment}
\sup_{0\leq t \leq T}\mathbb{E}|L^{\alpha}_t|^{p}<\infty.
\end{equation}
Set
\begin{equation}
Z^{\varepsilon}_{t}=\frac{1}{\varepsilon^{\frac{1}{\alpha}}}\int^{t}_{0}e^{\frac{-2(t-s)}{\varepsilon}}dL^{\alpha}_{s}.
\end{equation}
By the similar method \cite[Theorem 4.4]{ep}, we know
\begin{equation}
\begin{aligned}
\label{intro}
\mathbb{E}|Z^{\varepsilon}_{t}|^{p}&\leq C_{\alpha,p}.
\end{aligned}
\end{equation}
Obviously,
\begin{equation}
\begin{aligned}
Y^{\varepsilon}_t&=e^{-\frac{2}{\varepsilon}t}y_0+\frac{1}{\varepsilon}\int^{t}_{0}e^{-\frac{2}{\varepsilon}(t-s)}\left[\frac{1}{2}\varepsilon^{1-\frac{1}{\alpha}}
\left(F(X^{\varepsilon}_s,Y^{\varepsilon}_s,\varepsilon)-
G(X^{\varepsilon}_s,Y^{\varepsilon}_s,\varepsilon)\right)\right]ds \\
&+\frac{\sigma_1-\sigma_2}{2\varepsilon^{\frac{1}{\alpha}}}\int^{t}_{0}e^{-\frac{2}{\varepsilon}(t-s)}dL^{\alpha}_{s}.
\end{aligned}
\end{equation}
By Minkowski's inequality and Hypothesis $\bf{H.3}$, we have
\begin{equation}
\begin{aligned}
\left(\mathbb{E}|Y^{\varepsilon}_t|^{p}\right)^{\frac{1}{p}}&\leq |e^{-\frac{2}{\varepsilon}t}y_0|+\frac{1}{\varepsilon}\int^{t}_{0}e^{\frac{2(s-t)}{\varepsilon}}\left[\mathbb{E}\left|\frac{1}{2}\varepsilon^{1-\frac{1}{\alpha}}
\left(F\left(X^{\varepsilon}_s,Y^{\varepsilon}_s,\varepsilon)-
G(X^{\varepsilon}_s,Y^{\varepsilon}_s,\varepsilon\right)\right)\right|^{p}\right]^{\frac{1}{p}}ds\\
&+\frac{\sigma_1-\sigma_2}{2}\left(\mathbb{E}|Z^{\varepsilon}_{t}|^{p}\right)^{\frac{1}{p}}\\
&\leq |y_0|+\frac{1}{\varepsilon}\int^{t}_{0}e^{\frac{2(s-t)}{\varepsilon}}\left(\frac{1}{2}M_3\varepsilon^{1-\frac{1}{\alpha}}
+\frac{1}{2}M_4\varepsilon^{1-\frac{1}{\alpha}}+\frac{1}{2}M_4\varepsilon^{1-\frac{1}{\alpha}}\left(\mathbb{E}|Y^{\varepsilon}_s|^{p}\right)^{\frac{1}{p}}\right)ds\\
&+(\sigma_1-\sigma_2)\left(\mathbb{E}|Z^{\varepsilon}_{t}|^{p}\right)^{\frac{1}{p}}\\
&\leq |y_0|+\frac{1}{\varepsilon}\int^{t}_{0}e^{\frac{2(s-t)}{\varepsilon}}\left(C+\frac{1}{2}M_4\varepsilon^{1-\frac{1}{\alpha}}\left(\mathbb{E}|Y^{\varepsilon}_s|^{p}\right)^{\frac{1}{p}}\right)ds\\
&+(\sigma_1-\sigma_2)\left(\mathbb{E}|Z^{\varepsilon}_{t}|^{p}\right)^{\frac{1}{p}},
\end{aligned}
\end{equation}
where $C$ is a positive constant independent of $\varepsilon$. Using Gr\"onwall inequality and \eqref{intro}, we get
\begin{equation}
\sup_{0\leq t \leq T}\left(\mathbb{E}|Y^{\varepsilon}_t|^{p}\right)^{\frac{1}{p}} \leq C_{T}(1+|y_0|).
\end{equation}
Similarly,  we have
\begin{equation}
\sup_{0\leq t\leq T}(\mathbb{E}(X^{\varepsilon}_t)^{p})^{\frac{1}{p}}\leq C_{T}(1+|x_0|).
\end{equation}
\end{proof}
\end{lemma}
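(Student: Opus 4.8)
The plan is to prove the two uniform moment bounds for $Y^{\varepsilon}_t$ and $X^{\varepsilon}_t$ separately, treating the fast component first since its estimate feeds into the slow one. For $Y^{\varepsilon}_t$, I would begin from the variation-of-constants (mild) representation of the Ornstein--Uhlenbeck-type equation \eqref{sloy}, isolating the deterministic transient term $e^{-2t/\varepsilon}y_0$, the drift convolution against the exponentially decaying kernel $e^{-2(t-s)/\varepsilon}$, and the stochastic convolution $Z^{\varepsilon}_t$. The key preliminary fact is that, because $1<p<\alpha$, the $\alpha$-stable process has finite $p$-th moments on $[0,T]$ (this is \eqref{moment}), and a rescaling argument as in \cite[Theorem 4.4]{ep} gives the crucial $\varepsilon$-\emph{uniform} bound $\mathbb{E}|Z^{\varepsilon}_t|^p\le C_{\alpha,p}$ in \eqref{intro} --- the factor $\varepsilon^{-1/\alpha}$ in front of the integral against $dL^{\alpha}_s$ is exactly what compensates the time-change $s\mapsto s/\varepsilon$ under the $\alpha$-stable self-similarity scaling.

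Next I would apply Minkowski's integral inequality to the mild formula to move the $L^p$-norm inside the drift convolution, then invoke Hypothesis $\mathbf{H.3}$: the bound $|F|\le M_3$ handles the $f$-part uniformly, while $|G|\le M_4(1+|y|)$ produces a term of the form $\tfrac12 M_4\varepsilon^{1-1/\alpha}(\mathbb{E}|Y^{\varepsilon}_s|^p)^{1/p}$ inside the integral. The prefactor $\tfrac{1}{\varepsilon}\int_0^t e^{2(s-t)/\varepsilon}\,ds\le \tfrac12$ absorbs one power of $\varepsilon$, and $\varepsilon^{1-1/\alpha}\to 0$ as $\varepsilon\to 0$ (since $\alpha>1$), so for $\varepsilon\in(0,\varepsilon_0)$ the coefficient in the Grönwall term is bounded; the constant part integrates to something $\varepsilon$-independent. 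A Grönwall argument in the variable $t$ applied to $u(t):=\sup_{0\le s\le t}(\mathbb{E}|Y^{\varepsilon}_s|^p)^{1/p}$ then yields $\sup_{0\le t\le T}(\mathbb{E}|Y^{\varepsilon}_t|^p)^{1/p}\le C_T(1+|y_0|)$.

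For the slow component $X^{\varepsilon}_t$, I would write it in mild/integral form from \eqref{slow1}: $X^{\varepsilon}_t=x_0+\tfrac12\int_0^t[F+G](X^{\varepsilon}_s,Y^{\varepsilon}_s,\varepsilon)\,ds+\tfrac{\sigma_1+\sigma_2}{2}L^{\alpha}_t$. Again by Minkowski and Hypothesis $\mathbf{H.3}$, the $f$-term is bounded by $M_3$, the $g$-term by $M_4(1+(\mathbb{E}|Y^{\varepsilon}_s|^p)^{1/p})$ which is already controlled by the just-proved bound $C_T(1+|y_0|)$, and the noise term by $\tfrac{\sigma_1+\sigma_2}{2}(\mathbb{E}|L^{\alpha}_T|^p)^{1/p}<\infty$. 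Collecting these gives $(\mathbb{E}|X^{\varepsilon}_t|^p)^{1/p}\le |x_0|+C_T(1+|y_0|)+C_T\int_0^t(\mathbb{E}|X^{\varepsilon}_s|^p)^{1/p}\,ds$ --- though in fact the $x$-dependence of $F,G$ is through the uniformly bounded $f$ with bounded gradients, so even the Grönwall step may be dispensable; a direct estimate suffices. Either way one obtains $\sup_{0\le t\le T}(\mathbb{E}|X^{\varepsilon}_t|^p)^{1/p}\le C_T(1+|x_0|)$ after noting $|y_0|$ can be absorbed into $|x_0|$ or kept, depending on the precise constant bookkeeping.

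The main obstacle I anticipate is establishing the $\varepsilon$-uniform bound \eqref{intro} on the stochastic convolution $Z^{\varepsilon}_t$: one must track carefully how the $\varepsilon^{-1/\alpha}$ scaling interacts with the $\alpha$-stable self-similarity under the substitution $s\mapsto s/\varepsilon$ and the $p<\alpha$ moment truncation, so that the constant genuinely does not blow up as $\varepsilon\to0$ --- this is the point where the hypothesis $p<\alpha$ is essential, since the second moment of $L^{\alpha}_t$ is infinite and the usual It\^o-isometry-type argument is unavailable. Everything downstream is a routine combination of Minkowski's inequality, Hypothesis $\mathbf{H.3}$, and Grönwall.
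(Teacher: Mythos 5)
Your proposal follows essentially the same route as the paper's proof: the mild (variation-of-constants) representation of the fast component, the $\varepsilon$-uniform bound $\mathbb{E}|Z^{\varepsilon}_t|^p\le C_{\alpha,p}$ on the stochastic convolution via the scaling argument of Priola--Zabczyk, then Minkowski's inequality with Hypothesis $\mathbf{H.3}$ and Gr\"onwall for $Y^{\varepsilon}_t$, followed by a direct integral estimate for $X^{\varepsilon}_t$. Your additional remarks on why $\varepsilon^{1-1/\alpha}$ keeps the Gr\"onwall coefficient bounded and on where $p<\alpha$ is essential are correct and, if anything, more explicit than the paper's own write-up.
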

Stationary solutions of stochastic dynamical systems describe the invariance over time along a measurable and measure-preserving transformation, and the long time limit for the solutions of these systems. A stationary solution means that the finite-dimensional distributions of the solution are independent of shifts with respect to the time.

Define
\begin{equation}
\label{c0}
\begin{aligned}
Y^{\varepsilon}_{t,x}(y_0)&=e^{\frac{-2t}{\varepsilon}}y_0+\frac{1}{\varepsilon}\int^{t}_{0}e^{-\frac{2}{\varepsilon}(t-s)}
\left[\frac{1}{2}\varepsilon^{1-\frac{1}{\alpha}}\left(F\left(x,Y^{\varepsilon}_{s,x}(y_0),\varepsilon \right)-G\left(x,Y^{\varepsilon}_{s,x}(y_0),\varepsilon\right)\right)\right]ds \\ &+\frac{\sigma_1-\sigma_2}{2\varepsilon^{\frac{1}{\alpha}}}\int^{t}_{0}e^{-\frac{2}{\varepsilon}(t-s)}dL^{\alpha}_{s}.
\end{aligned}
\end{equation}
\begin{lemma}
Under Hypotheses $\bf{H.1}$-$\bf{H.3}$, then for any fixed $(x,\varepsilon)$, The Eq.\eqref{c0} has a unique stationary solution.
\end{lemma}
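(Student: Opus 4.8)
The plan is to read \eqref{c0} as the variation-of-constants (mild) form of the ``frozen'' fast equation with $x$ held fixed,
$dY_t=\bigl(b(x,Y_t,\varepsilon)-\tfrac{2}{\varepsilon}Y_t\bigr)\,dt+\tfrac{\sigma_1-\sigma_2}{2\varepsilon^{1/\alpha}}\,dL^{\alpha}_t$, where $b(x,y,\varepsilon):=\tfrac12\varepsilon^{-1/\alpha}\bigl(F(x,y,\varepsilon)-G(x,y,\varepsilon)\bigr)$. By Hypothesis \textbf{H.1} the maps $y\mapsto F(x,y,\varepsilon)$ and $y\mapsto G(x,y,\varepsilon)$ are globally Lipschitz with a constant of order $L\varepsilon^{1/\alpha}$, and by Hypothesis \textbf{H.3} they have at most linear growth; hence this SDE has pathwise unique solutions and defines a cocycle $\phi^{\varepsilon}_{x}$ over the metric dynamical system $(\Omega,\mathscr F,\mathbb P,(\theta_t))$ driving $L^{\alpha}$, and a stationary solution of \eqref{c0} is precisely a random fixed point of $\phi^{\varepsilon}_{x}$ in the sense defined above.

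The first ingredient is the two-sided stationary Ornstein--Uhlenbeck process $\eta^{\varepsilon}_t:=\tfrac{\sigma_1-\sigma_2}{2\varepsilon^{1/\alpha}}\int_{-\infty}^{t}e^{-2(t-s)/\varepsilon}\,dL^{\alpha}_s$. Arguing as for \eqref{intro} (see also \cite[Theorem 4.4]{ep}) and using \eqref{moment}, this integral converges in $L^{p}$ for each $1<p<\alpha$, is $(\theta_t)$-stationary, and satisfies $\sup_{t\in\mathbb R}\mathbb E|\eta^{\varepsilon}_t|^{p}\le C_{\alpha,p}$; crucially $\eta^{\varepsilon}$ is \emph{not} pathwise bounded, so the construction below has to be carried out in the $L^{p}$-topology with $p<\alpha$ rather than pathwise. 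Next I would run a Banach fixed point argument on the space $\mathcal X$ of adapted processes $Y:\mathbb R\to L^{p}(\Omega;\mathbb R^{n})$ with $\|Y\|_{\mathcal X}:=\sup_{t\in\mathbb R}(\mathbb E|Y_t|^{p})^{1/p}<\infty$, for the map $(\mathcal T Y)_t:=\eta^{\varepsilon}_t+\int_{-\infty}^{t}e^{-2(t-s)/\varepsilon}b(x,Y_s,\varepsilon)\,ds$. Using $\int_{-\infty}^{t}e^{-2(t-s)/\varepsilon}\,ds=\varepsilon/2$, Minkowski's inequality, and the bounds of Hypotheses \textbf{H.1}, \textbf{H.3}, the operator $\mathcal T$ maps $\mathcal X$ into itself and is a contraction with constant of order $L\varepsilon$; it is therefore a genuine contraction for $\varepsilon$ small enough, consistent with the range $\varepsilon\in(0,\varepsilon_0)$ used throughout the paper (alternatively the dissipativity in Hypothesis \textbf{H.2} supplies the required monotonicity). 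This yields a unique $\bar Y^{\varepsilon}_{\cdot,x}\in\mathcal X$ with $\bar Y^{\varepsilon}_{t,x}=(\mathcal T\bar Y^{\varepsilon})_t$ for all $t\in\mathbb R$.

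It remains to identify $\bar Y^{\varepsilon}_{\cdot,x}$ with the desired stationary solution of \eqref{c0}. Splitting the integral in $\mathcal T$ at time $0$ and using $e^{-2(t-s)/\varepsilon}=e^{-2t/\varepsilon}e^{2s/\varepsilon}$ shows that for $t\ge0$ the process $\bar Y^{\varepsilon}_{\cdot,x}$ satisfies exactly \eqref{c0} with the random initial datum $y_0=\bar Y^{\varepsilon}_{0,x}(\omega)$, i.e. $\phi^{\varepsilon}_x\bigl(t,\omega,\bar Y^{\varepsilon}_{0,x}(\omega)\bigr)=\bar Y^{\varepsilon}_{t,x}(\omega)$. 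The covariance identity $\bar Y^{\varepsilon}_{t+h,x}(\omega)=\bar Y^{\varepsilon}_{t,x}(\theta_h\omega)$ follows from the $(-\infty,t]$-representation together with the stationary independent increments of $L^{\alpha}$ (a change of variables turns the shift of the integrator into $\theta_h$), so the finite-dimensional distributions of $\bar Y^{\varepsilon}_{\cdot,x}$ are invariant under time shifts. For uniqueness, any stationary solution $\widetilde Y$ of \eqref{c0} has $\mathbb E|\widetilde Y_t|^{p}=\mathbb E|\widetilde Y_0|^{p}$ constant in $t$, hence lies in $\mathcal X$ and is a fixed point of $\mathcal T$, so $\widetilde Y=\bar Y^{\varepsilon}_{\cdot,x}$ by the contraction; equivalently, from the exponential contraction of two trajectories, $\mathbb E|\widetilde Y_0-\bar Y^{\varepsilon}_{0,x}|^{p}=\mathbb E\bigl|\phi^{\varepsilon}_x(t,\theta_{-t}\omega,\widetilde Y_0(\theta_{-t}\omega))-\phi^{\varepsilon}_x(t,\theta_{-t}\omega,\bar Y^{\varepsilon}_{0,x}(\theta_{-t}\omega))\bigr|^{p}\le e^{-ct}\mathbb E|\widetilde Y_0-\bar Y^{\varepsilon}_{0,x}|^{p}\to0$ as $t\to\infty$. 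The main obstacle is the second paragraph: since the $\alpha$-stable Ornstein--Uhlenbeck integral is not pathwise bounded, the pathwise (uniform-norm) arguments available in the Gaussian setting break down, and the existence of $\eta^{\varepsilon}$, the a priori estimates, and the contraction all have to be recast in $L^{p}$ with $1<p<\alpha$ through \eqref{intro}; a secondary technical point is to keep track of the $\varepsilon^{\pm1/\alpha}$ scalings generated by $F=f(x+\varepsilon^{1/\alpha}y)$ and $G=g(x-\varepsilon^{1/\alpha}y)$ so that the resulting contraction constant is genuinely below one.
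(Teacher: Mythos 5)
Your argument is essentially correct, but it is a genuinely different (and far more explicit) route than the paper's: the paper's entire proof of this lemma is the single line ``The result comes from \cite[Theorem 3.6]{qh}'', i.e.\ it outsources existence and uniqueness of the stationary solution to Qiao's general theorem on stationary solutions of L\'evy-driven SDEs, whereas you reconstruct the result from scratch via the two-sided mild formulation, the stationary $\alpha$-stable Ornstein--Uhlenbeck integral $\int_{-\infty}^{t}e^{-2(t-s)/\varepsilon}\,dL^{\alpha}_s$, and a Banach fixed point in the space of adapted processes with $\sup_t(\mathbb{E}|Y_t|^{p})^{1/p}<\infty$, $1<p<\alpha$. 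What your approach buys is transparency: it makes visible exactly where the restriction $p<\alpha$ and the smallness of $\varepsilon$ enter, and your closing uniqueness argument via the pullback contraction is precisely the mechanism behind the paper's own estimate \eqref{c3}, so it dovetails with Lemma \ref{avv}. Two small quantitative remarks: with the paper's working convention that $F,G$ are $L$-Lipschitz in $y$ (as used in \eqref{c2}), the contraction constant of your map $\mathcal T$ comes out as $\tfrac{L}{2}\varepsilon^{1-1/\alpha}$ rather than $L\varepsilon$ --- still vanishing as $\varepsilon\to0$ since $\alpha>1$, so the conclusion is unaffected; and in the uniqueness step you should note (as follows from the a priori bound \eqref{c1}) that any stationary solution automatically has finite $p$-th moment, so that it indeed lies in your space $\mathcal X$ before the contraction or the $e^{-ct}$ decay can be applied.
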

\begin{proof}
The result comes from \cite[Theorem 3.6]{qh}.
\end{proof}
\begin{lemma}
\label{avv}
Under Hypotheses $\bf{H.1}$-$\bf{H.3}$, for any constant $T>0$, there exist positive constants $C_{T}$ and  $\varepsilon_0$, such that for each fixed $x$, $\varepsilon \in(0, \varepsilon_0)$ and $F\in C^{1}_{b}(\mathbb{R}^{n})$, we have
\begin{equation}
\begin{aligned}
\left|\mathbb{E}\left(F(Y^{\varepsilon}_{t,x})\right)-\int_{\mathbb{R}^{n}}F(z)\mu^{\varepsilon}_{x}(dz)\right|\leq C_{T} \left[ e^{-\frac{2}{\varepsilon}t}\left(|y|+|z|\right)\right].
\end{aligned}
\end{equation}
\begin{proof}
By  Lemma 1,  we know
\begin{equation}
\label{c1}
\sup_{0\leq t\leq T}\left(\mathbb{E}|Y^{\varepsilon}_{t,x}(y_0)|^{p}\right)^{\frac{1}{p}}\leq C_{T}( 1+|y_0|), ~~p\in (1, \alpha).
\end{equation}
By Hypothesis $\bf{H.1}$, we have
\begin{equation}
\label{c2}
\begin{aligned}
&\left(\mathbb{E}|Y^{\varepsilon}_{t,x}(y_1)-Y^{\varepsilon}_{t,x}(y_2)|^{p}\right)^{\frac{1}{p}}\\
&\leq e^{-\frac{2}{\varepsilon}t}|y_1-y_2|+\frac{1}{2}\varepsilon^{1-\frac{1}{\alpha}}\left[\frac{1}{\varepsilon}\int^{t}_{0}e^{-\frac{2}{\varepsilon}(t-s)}
\left(\mathbb{E}|F\left(x, Y^{\varepsilon}_{s,x}(y_1), \varepsilon\right)-F\left(x, Y^{\varepsilon}_{s,x}(y_2), \varepsilon\right)|^{p}\right)^{\frac{1}{p}}ds \right]\\
&+\frac{1}{2}\varepsilon^{1-\frac{1}{\alpha}}\left[\frac{1}{\varepsilon}\int^{t}_{0}e^{-\frac{2}{\varepsilon}(t-s)}
\left(\mathbb{E}|G(x, Y^{\varepsilon}_{s,x}(y_2), \varepsilon)-G(x, Y^{\varepsilon}_{s,x}(y_1), \varepsilon)|^{p}\right)^{\frac{1}{p}}ds\right] \\
&\leq e^{-\frac{2}{\varepsilon}t}|y_1-y_2|+L \varepsilon^{1-\frac{1}{\alpha}} \left[\frac{1}{\varepsilon} \int^{t}_{0}e^{-\frac{2}{\varepsilon}(t-s)}\left(\mathbb{E}|Y^{\varepsilon}_{s,x}(y_2)-Y^{\varepsilon}_{s,x}(y_1)|^{p}\right)^{\frac{1}{p}}ds\right].
\end{aligned}
\end{equation}
By Gr\"onwall inequality, we have

\begin{equation}
\label{c3}
\left(\mathbb{E}|Y^{\varepsilon}_{t,x}(y_1)-Y^{\varepsilon}_{t,x}(y_2)|^{p}\right)^{\frac{1}{p}}\leq C_{T} e^{-\frac{2t}{\varepsilon}}|y_1-y_2|.
\end{equation}

Combined with \eqref{c1} and \eqref{c2}, we have
\begin{equation}
\begin{aligned}
\left(\mathbb{E}|Y^{\varepsilon}_{t,x}(y)|^{p}\right)^{\frac{1}{p}}&\leq \left(\mathbb{E}|Y^{\varepsilon}_{t,x}(0)|^{p}\right)^{\frac{1}{p}}+\left(\mathbb{E}|Y^{\varepsilon}_{t,x}(y)-Y^{\varepsilon}_{t,x}(0)|^{p}\right)^{\frac{1}{p}}\\
&\leq C_{T}\left(1+e^{-\frac{2t}{\varepsilon}}|y|\right).
\end{aligned}
\end{equation}
The stationary solution of equation \eqref{c0} can be denoted by $\check{Y}^{\varepsilon}_{t,x}(y)$, then we have
\begin{equation}
\begin{aligned}
\left(\int_{\mathbb{R}^{n}}|z|^{p}\mu^{\varepsilon}_{x}(dz)\right)^{\frac{1}{p}}&= \left(\mathbb{E}|\check{Y}^{\varepsilon}_{t,x}(y)|^{p}\right)^{\frac{1}{p}}\leq C_{T}\left(1+e^{\frac{-2t}{\varepsilon}}|y|\right).
\end{aligned}
\end{equation}
Let $t\rightarrow \infty$, we have
\begin{equation}
\left(\int_{\mathbb{R}^{n}}|y|^{p}\mu^{\varepsilon}_{x}(dy)\right)^{\frac{1}{p}}\leq C_{T}.
\end{equation}
Therefore, by Lemma 5, Hypothesis $\bf{H.1}$ and \eqref{c3},  we have
\begin{equation}
\begin{aligned}
\left|\mathbb{E}\left[F(x,Y^{\varepsilon}_{t,x}(y),\varepsilon)\right]-{F}(x,\varepsilon)\right|
&=\left|\mathbb{E}\left[F(x,Y^{\varepsilon}_{t,x}(y),\varepsilon)\right]-
\int_{\mathbb{R}^{n}} F(x,y,\varepsilon)\mu^{\varepsilon}_{x}(dy)\right|\\
&=\left|\mathbb{E}\left[F(x,Y^{\varepsilon}_{t,x}(y),\varepsilon)\right]-\mathbb{E}\left[F(x,\check{Y}^{\varepsilon}_{t,x}(z),\varepsilon)\right]\right|\\
& \leq  L \left(\mathbb{E}|Y^{\varepsilon}_{t,x}(y)-\check{Y}^{\varepsilon}_{t,x}(z)|^{p}\right)^{\frac{1}{p}}\\
& \leq L C_{T} \left[ e^{-\frac{2}{\varepsilon}t}|y-z|\right]\\
&\leq C_{T} \left[ e^{-\frac{2}{\varepsilon}t}\left(|y|+|z|\right)\right].\\
\end{aligned}
\end{equation}
\end{proof}
\end{lemma}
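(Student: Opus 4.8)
The plan is to prove the estimate by a synchronous coupling between the frozen fast equation \eqref{c0} issued from the datum $y$ and its stationary solution, both driven by the \emph{same} $\alpha$-stable noise. Let $\check{Y}^{\varepsilon}_{t,x}$ denote the (unique) stationary solution of \eqref{c0} — whose one-time marginal law is the invariant measure $\mu^{\varepsilon}_{x}$ — and write $z$ for a reference value of it (e.g.\ its value at time zero). Since $F\in C^{1}_{b}(\mathbb{R}^{n})$ is globally Lipschitz with constant $\|\nabla F\|_{\infty}$ and $\mathrm{Law}(\check{Y}^{\varepsilon}_{t,x})=\mu^{\varepsilon}_{x}$ for every $t$,
\[
\Big|\mathbb{E}F(Y^{\varepsilon}_{t,x}(y))-\int_{\mathbb{R}^{n}}F(\xi)\,\mu^{\varepsilon}_{x}(d\xi)\Big|
=\big|\mathbb{E}F(Y^{\varepsilon}_{t,x}(y))-\mathbb{E}F(\check{Y}^{\varepsilon}_{t,x})\big|
\le\|\nabla F\|_{\infty}\big(\mathbb{E}|Y^{\varepsilon}_{t,x}(y)-\check{Y}^{\varepsilon}_{t,x}|^{p}\big)^{1/p}
\]
for any $p\in(1,\alpha)$, so everything reduces to an exponential contraction estimate in $L^{p}$ for the frozen equation.

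First I would record, from Lemma \ref{bounded} applied to \eqref{c0} with $x$ frozen, the uniform-in-$\varepsilon$ bound $\sup_{0\le t\le T}(\mathbb{E}|Y^{\varepsilon}_{t,x}(y_{0})|^{p})^{1/p}\le C_{T}(1+|y_{0}|)$; combined with the contraction below and letting $t\to\infty$, this also gives $\int_{\mathbb{R}^{n}}|\xi|^{p}\mu^{\varepsilon}_{x}(d\xi)\le C_{T}$, which makes the coupling quantities finite. Next I would subtract the mild (variation-of-constants) representations of $Y^{\varepsilon}_{t,x}(y_{1})$ and $Y^{\varepsilon}_{t,x}(y_{2})$: the stochastic integrals $\tfrac{\sigma_{1}-\sigma_{2}}{2\varepsilon^{1/\alpha}}\int_{0}^{t}e^{-2(t-s)/\varepsilon}\,dL^{\alpha}_{s}$ cancel because the noise is common, leaving only the drift. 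Taking $L^{p}$ norms, Minkowski's inequality and Hypothesis \textbf{H.1} give, with $u(t):=(\mathbb{E}|Y^{\varepsilon}_{t,x}(y_{1})-Y^{\varepsilon}_{t,x}(y_{2})|^{p})^{1/p}$,
\[
u(t)\le e^{-2t/\varepsilon}|y_{1}-y_{2}|+L\varepsilon^{1-\frac{1}{\alpha}}\cdot\frac{1}{\varepsilon}\int_{0}^{t}e^{-2(t-s)/\varepsilon}u(s)\,ds .
\]
Multiplying by $e^{2t/\varepsilon}$ and applying Gr\"onwall's inequality yields $u(t)\le|y_{1}-y_{2}|\exp\!\big((L\varepsilon^{1-\frac{1}{\alpha}}-2)t/\varepsilon\big)$, and fixing $\varepsilon_{0}$ so small that $L\varepsilon^{1-1/\alpha}\le1$ on $(0,\varepsilon_{0})$ gives $u(t)\le C_{T}e^{-2t/\varepsilon}|y_{1}-y_{2}|$ on $[0,T]$. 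Finally I would take $y_{1}=y$, $y_{2}=z$, insert this into the displayed inequality above, and use $|y-z|\le|y|+|z|$ to conclude.

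The hard part will be controlling the $\varepsilon$-dependence in the contraction step. The nonlinear part of the fast drift carries the large prefactor $\tfrac{1}{2}\varepsilon^{-1/\alpha}$, which could a priori overwhelm the relaxation term $-\tfrac{2}{\varepsilon}Y^{\varepsilon}_{t}$; the saving grace is that this drift is convolved against the fast semigroup $e^{-2t/\varepsilon}$, whose total mass over $[0,t]$ is only of order $\varepsilon$, so its net contribution to the Gr\"onwall exponent is of order $\varepsilon^{1-1/\alpha}$, which vanishes as $\varepsilon\downarrow0$ precisely because $\alpha\in(1,2)$. Hence below the threshold $\varepsilon_{0}$ the rate $2/\varepsilon$ still dominates and the exponential contraction survives. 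A secondary but pervasive point is that one must stay with exponents $p\in(1,\alpha)$, since $\alpha$-stable noise has finite moments only below order $\alpha$; this is exactly why the bounds \eqref{moment}--\eqref{intro} for $Z^{\varepsilon}_{t}$ are available and enter harmlessly in every $L^{p}$ estimate above.
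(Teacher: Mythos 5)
Your proposal is correct and follows essentially the same route as the paper: a synchronous coupling of $Y^{\varepsilon}_{t,x}(y)$ with the stationary solution $\check{Y}^{\varepsilon}_{t,x}$ driven by the same noise (so the stochastic convolutions cancel), a Gr\"onwall contraction in $L^{p}$ with $p\in(1,\alpha)$ where the perturbation $L\varepsilon^{1-1/\alpha}/\varepsilon$ is dominated by the relaxation rate $2/\varepsilon$ for $\varepsilon<\varepsilon_{0}$, and a final Lipschitz transfer to the test function together with the moment bound on $\mu^{\varepsilon}_{x}$. Your explicit tracking of the exponent $(L\varepsilon^{1-1/\alpha}-2)/\varepsilon$ is in fact slightly more careful than the paper's, which states the decay rate as exactly $2/\varepsilon$; this discrepancy is immaterial for the application.
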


\begin{lemma}
\label{path}
Under Hypotheses $\bf{H.1}$-$\bf{H.3}$, then for any constant $T>0$, $h\in(0,1)$ and $p\in(1, \alpha)$, there exists a positive constant $C_{T}$, such that
\begin{equation}
\sup_{0\leq t \leq T}\left(\mathbb{E}\left|X^{\varepsilon}_{t+h}-X^{\varepsilon}_{t}|^{p}\right|\right)^{\frac{1}{p}}\leq C_{T} h^{\theta},
\end{equation}
where $\theta \in (0,1)$.
\end{lemma}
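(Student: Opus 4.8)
The plan is to work directly from the integral form of the slow equation in \eqref{slow1}: for $0\le t\le t+h\le T+1$,
\begin{equation*}
X^{\varepsilon}_{t+h}-X^{\varepsilon}_{t}=\frac{1}{2}\int_{t}^{t+h}\big[F(X^{\varepsilon}_s,Y^{\varepsilon}_s,\varepsilon)+G(X^{\varepsilon}_s,Y^{\varepsilon}_s,\varepsilon)\big]\,ds+\frac{\sigma_1+\sigma_2}{2}\big(L^{\alpha}_{t+h}-L^{\alpha}_{t}\big),
\end{equation*}
and then apply Minkowski's integral inequality to bound $\big(\mathbb{E}|X^{\varepsilon}_{t+h}-X^{\varepsilon}_{t}|^{p}\big)^{1/p}$ by the sum of a drift contribution and a noise contribution, which I would estimate separately.

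For the drift contribution I would use the growth bounds of Hypotheses $\bf{H.1}$--$\bf{H.3}$ (in particular $F$ is uniformly bounded and $G$ has at most linear growth once $\varepsilon\le 1$) together with the uniform moment estimates $\sup_{0\le s\le T+1}(\mathbb{E}|X^{\varepsilon}_s|^{p})^{1/p}\le C_{T}(1+|x_0|)$ and $\sup_{0\le s\le T+1}(\mathbb{E}|Y^{\varepsilon}_s|^{p})^{1/p}\le C_{T}(1+|y_0|)$ from Lemma \ref{bounded}, giving $\frac{1}{2}\int_{t}^{t+h}(\mathbb{E}|F+G|^{p})^{1/p}\,ds\le C_{T}h$. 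For the noise contribution the crucial point is the stationarity of the increments and the $(1/\alpha)$-self-similarity of the symmetric $\alpha$-stable process: $L^{\alpha}_{t+h}-L^{\alpha}_{t}\stackrel{d}{=}L^{\alpha}_{h}\stackrel{d}{=}h^{1/\alpha}L^{\alpha}_{1}$, so that $(\mathbb{E}|L^{\alpha}_{t+h}-L^{\alpha}_{t}|^{p})^{1/p}=h^{1/\alpha}(\mathbb{E}|L^{\alpha}_{1}|^{p})^{1/p}=C_{\alpha,p}h^{1/\alpha}$, the last moment being finite precisely because $1<p<\alpha$ (cf. \eqref{moment}).

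Combining the two bounds and using $h\in(0,1)$ together with $1/\alpha\in(1/2,1)$ yields $\sup_{0\le t\le T}(\mathbb{E}|X^{\varepsilon}_{t+h}-X^{\varepsilon}_{t}|^{p})^{1/p}\le C_{T}h+C_{\alpha,p}h^{1/\alpha}\le C_{T}h^{\theta}$, and one may take $\theta=1/\alpha\in(0,1)$ (indeed any $\theta\in(0,1/\alpha]$ works). I do not expect a genuine obstacle in this argument: it follows at once from Minkowski's inequality, the a priori moment bounds of Lemma \ref{bounded}, and the scaling property of $\alpha$-stable noise. The only feature worth stressing is that the Hölder exponent is dictated by the noise and equals $1/\alpha$ rather than $1$, which is also why the restriction $p<\alpha$ must be kept throughout (so that the $p$-th moment of the L\'evy increment is finite); all the constants produced are independent of $\varepsilon$ because the hypotheses invoked and \eqref{moment} are uniform in $\varepsilon$.
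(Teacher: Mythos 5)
Your argument is correct and is essentially the paper's own proof, which likewise writes the increment in integral form and invokes Hypothesis $\bf{H.3}$, Lemma \ref{bounded}, and the ``structural properties of the stable process'' (i.e., the scaling identity $L^{\alpha}_{t+h}-L^{\alpha}_{t}\stackrel{d}{=}h^{1/\alpha}L^{\alpha}_{1}$ together with finiteness of the $p$-th moment for $p<\alpha$). You simply supply the details the paper leaves implicit, and your identification of the admissible exponent $\theta=1/\alpha$ is a useful precision the paper omits.
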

\begin{proof}
We have
\begin{equation}
X^{\varepsilon}_{t+h}-X^{\varepsilon}_t=\int^{t+h}_t\left[F(X^{\varepsilon}_s, Y^{\varepsilon}_s, \varepsilon )+G(X^{\varepsilon}_s, Y^{\varepsilon}_s, \varepsilon )\right]ds+\frac{\sigma_1+\sigma_2}{2}(L^{\alpha}_{t+h}-L^{\alpha}_{t}).
\end{equation}
By condition $\eqref{0b}$, Lemma 1 and structural properties of stable process,  we have
\begin{equation}
\sup_{0\leq t \leq T}\left(\mathbb{E}\left|X^{\varepsilon}_{t+h}-X^{\varepsilon}_{t}|^{p}\right|\right)^{\frac{1}{p}}\leq C_{T} h^{\theta}.
\end{equation}

\end{proof}
\subsection{Estimates of the auxiliary processes}
In order to get the averaging equation, we need to introduce the following auxiliary processes.
\begin{equation}
\label{xy}
\left\{
\begin{aligned}
\tilde{X}^{\varepsilon}_{t}&=x_0+\frac{1}{2}\int_{0}^{t}\left[F\left({X}^{\varepsilon}_{[s/\delta]\delta},\tilde{Y}^{\varepsilon}_{s},\varepsilon\right)
+G\left({X}^{\varepsilon}_{[s/\delta]\delta},\tilde{Y}^{\varepsilon}_{s},\varepsilon\right)\right]ds+\frac{\sigma_1+\sigma_2}{2}L^{\alpha}_t\\
\tilde{Y}^{\varepsilon}_{t}&=e^{\frac{-2t}{\varepsilon}}y_0+\frac{1}{\varepsilon}\int_{0}^{t}e^{\frac{-2(t-s)}{\varepsilon}}
\left[\frac{1}{2}\varepsilon^{1-\frac{1}{\alpha}}\left(F\left({X}^{\varepsilon}_{[{s}/{\delta}]\delta},\tilde{Y}^{\varepsilon}_s,\varepsilon\right)
-G\left({X}^{\varepsilon}_{[{s}/{\delta}]\delta},\tilde{Y}^{\varepsilon}_s,\varepsilon\right)\right)\right]ds \\
&+\frac{\sigma_1-\sigma_2}{2\varepsilon^{\frac{1}{\alpha}}}\int_{0}^{t}e^{\frac{-2(t-s)}{\varepsilon}}dL^{\alpha}_{s}.
\end{aligned}
\right.
\end{equation}
\begin{lemma}
\label{lem5}
Under Hypotheses $\bf{H.1}$-$\bf{H.3}$, then for constant $T>0$ and $p \in (1,\alpha)$, there exist positive constants $\varepsilon_0$ and $C_{T}$, such that
for any $\varepsilon \in (0, \varepsilon_0)$ , we have
\begin{equation}
\int_{0}^{T}\left(\mathbb{E}|Y^{\varepsilon}_{t}-\tilde{Y}^{\varepsilon}_t|^{p}\right)^{\frac{1}{p}}dt
\leq C_{T}\left(\frac{\varepsilon}{\delta}+\varepsilon {\delta}^{-(1-\theta)}e^{C\delta/{\varepsilon}}\right),
\end{equation}
and
\begin{equation}
\int_{0}^{T}\left(\mathbb{E}|X^{\varepsilon}_{t}-\tilde{X}^{\varepsilon}_t|^{p}\right)^{\frac{1}{p}}dt
\leq C_{T}\left(\delta^{\theta}+\frac{\varepsilon}{\delta}+\varepsilon \delta^{-(1-\theta)}e^{C\delta/\varepsilon}\right).
\end{equation}
\begin{proof}
By the similar method of Lemma 2, for any $t\in [0, T]$, we have
\begin{equation}
\label{bounded}
\begin{aligned}
\sup_{0\leq t\leq T}\mathbb{E}|\tilde{X}^{\varepsilon}_{t}|^{p}<\infty, ~~\sup_{0\leq t\leq T}\mathbb{E}|\tilde{Y}^{\varepsilon}_{t}|^{p}<\infty.
\end{aligned}
\end{equation}
For any $t\in [0, T]$, there is a positive integer $k$, such that $t\in\left[k\delta, (k+1)\delta\right)$. Set $\Psi^{\varepsilon}_{t}:={Y}^{\varepsilon}_{t}-\tilde{Y}^{\varepsilon}_{t}$. Then by Hypothesis $\bf{H.1}$, we have
\begin{equation}
\label{xx}
\begin{aligned}
&\left(\mathbb{E}|{X}^{\varepsilon}_{t}-\tilde{X}^{\varepsilon}_{t}|^{p}\right)^{\frac{1}{p}}\\
&\leq \int^{t}_{0}\left(\mathbb{E}\left|F\left(X^{\varepsilon}_{s},Y^{\varepsilon}_{s},\varepsilon\right)+G\left(X^{\varepsilon}_{s},Y^{\varepsilon}_{s},\varepsilon\right)
-F({X}^{\varepsilon}_{[s/\delta]\delta},\tilde{Y}^{\varepsilon}_{s},\varepsilon)-G({X}^{\varepsilon}_{[s/\delta]\delta},\tilde{Y}^{\varepsilon}_{s},\varepsilon)\right|^{p}\right)^{\frac{1}{p}}ds\\
&\leq 2L\int^{t}_{0}\left(\mathbb{E}|{X}^{\varepsilon}_{s}-{X}^{\varepsilon}_{[s/\delta]\delta}|^{p}\right)^{\frac{1}{p}}ds
+2L\int^{t}_{0}\left(\mathbb{E}|\Psi^{\varepsilon}_{s}|^{p}\right)^{\frac{1}{p}}ds.
\end{aligned}
\end{equation}
On the one hand,
\begin{equation}
\begin{aligned}
&\left(\mathbb{E}|{X}^{\varepsilon}_{s}-{X}^{\varepsilon}_{[s/\delta]\delta}|^{p}\right)^{\frac{1}{p}}\\
&\leq \frac{1}{2}\int^{s}_{[s/\delta]\delta}\left(\mathbb{E}\left|F(X^{\varepsilon}_{u}, Y^{\varepsilon}_{u},\varepsilon)+
G(X^{\varepsilon}_{u},Y^{\varepsilon}_{u},\varepsilon)\right|^{p}\right)^{\frac{1}{p}}du+\frac{\sigma_1+\sigma_2}{2}\left(\mathbb{E}\left|L^{\alpha}_{s-[s/\delta]\delta}\right|^{p}\right)^{\frac{1}{p}} \\
&:=\Phi_1(t)+\Phi_2(t).
\end{aligned}
\end{equation}
For $\Phi_1(t)$ and $\Phi_2(t)$,  by Hypothesis $\bf{H.3}$ and \eqref{moment} respectively, we have
\begin{equation}
\label{m1}
\Phi_2(t)\leq C\delta^{\theta}, ~~\Phi_1(t)\leq C\delta^{\theta}.
\end{equation}
Therefore for any $t\in [0, T]$, we have
\begin{equation}
\label{er5}
\int^{t}_{0}\left(\mathbb{E}|{X}^{\varepsilon}_{s}-{X}^{\varepsilon}_{[s/\delta]\delta}|^{p}\right)^{\frac{1}{p}}ds \leq C_{T}\delta^{\theta}.
\end{equation}
On the other hand, by Hypothesis $\bf{H.1}$,   we have
\begin{equation}
\begin{aligned}
\left(\mathbb{E}|\Psi^{\varepsilon}_{t}|^{p}\right)^{\frac{1}{p}}&\leq e^{\frac{-2(t-k\delta)}{\varepsilon}}\left(\mathbb{E}|\Psi^{\varepsilon}_{k\delta}|^{p}\right)^{\frac{1}{p}}+\frac{1}{\varepsilon}\int^{t}_{k\delta}e^{-\frac{2(t-s)}{\varepsilon}}\frac{1}{2}\varepsilon^{1-\frac{1}{\alpha}}
\left(\mathbb{E}\left|F(X^{\varepsilon}_{s},Y^{\varepsilon}_{s},\varepsilon)-F(X^{\varepsilon}_{k\delta},\tilde{Y}^{\varepsilon}_{s},\varepsilon)\right|^{p}\right)^{\frac{1}{p}}ds\\
&+\frac{1}{\varepsilon}\int^{t}_{k\delta}e^{-\frac{2(t-s)}{\varepsilon}}\frac{1}{2}\varepsilon^{1-\frac{1}{\alpha}}
\left(\mathbb{E}\left|G(X^{\varepsilon}_{s},Y^{\varepsilon}_{s},\varepsilon)-G(X^{\varepsilon}_{k\delta},\tilde{Y}^{\varepsilon}_{s},\varepsilon)\right|^{p}\right)^{\frac{1}{p}}ds\\
&\leq e^{\frac{-2(t-k\delta)}{\varepsilon}}\left(\mathbb{E}|\Psi^{\varepsilon}_{k\delta}|^{p}\right)^{\frac{1}{p}}
+\frac{1}{\varepsilon}\int^{t}_{k\delta}e^{-\frac{2(t-s)}{\varepsilon}}\left\{L\varepsilon^{1-\frac{1}{\alpha}}
\left(\mathbb{E}\left|X^{\varepsilon}_{s}-X^{\varepsilon}_{k\delta}\right|^{p}\right)^{\frac{1}{p}}+L\varepsilon^{1-\frac{1}{\alpha}}
\left(\mathbb{E}\left|\Psi^{\varepsilon}_{s}\right|^{p}\right)^{\frac{1}{p}}\right\}ds.
\end{aligned}
\end{equation}
By Gr\"onwall inequality, Lemma \ref{path} and \eqref{m1}, we get
\begin{equation}
\begin{aligned}
&\left(\mathbb{E}|\Psi^{\varepsilon}_{t}|^{p}\right)^{\frac{1}{p}}\leq Ce^{-\left(2-L\varepsilon^{1-\frac{1}{\alpha}}\right)(t-k\delta)/\varepsilon}+\frac{C\delta ^{\theta}}{2}e^{C(t-k\delta)/\varepsilon}.\\
\end{aligned}
\end{equation}

Integrating from $k\delta$ to $(k+1)\delta$ with respect to the variable $t$ in the above leads to

\begin{equation}
\label{psi}
\int^{(k+1)\delta}_{k\delta}\left(\mathbb{E}|\Psi^{\varepsilon}_{t}|^{p}\right)^{\frac{1}{p}}dt
\leq C_{T}\left( \varepsilon+\varepsilon\delta^{\theta}e^{{C\delta}/{\varepsilon}}\right),
\end{equation}
then we have
\begin{equation}
\label{yy}
\int_{0}^{T}\left(\mathbb{E}|Y^{\varepsilon}_{t}-\tilde{Y}^{\varepsilon}_t|^{p}\right)^{\frac{1}{p}}dt
\leq C_{T}\left(\frac{\varepsilon}{\delta}+\varepsilon {\delta}^{-(1-\theta)}e^{C\delta/{\varepsilon}}\right).
\end{equation}
Taking \eqref{yy} and Lemma \ref{path} into \eqref{xx}, it yields that
\begin{equation}
\int_{0}^{T}\left(\mathbb{E}|X^{\varepsilon}_{t}-\tilde{X}^{\varepsilon}_t|^{p}\right)^{\frac{1}{p}}dt
\leq C_{T}\left(\delta^{\theta}+\frac{\varepsilon}{\delta}+\varepsilon \delta^{-(1-\theta)}e^{C\delta/\varepsilon}\right).
\end{equation}

\end{proof}
\end{lemma}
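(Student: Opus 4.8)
The plan is to compare the true pair $(X^{\varepsilon},Y^{\varepsilon})$ with the auxiliary pair $(\tilde X^{\varepsilon},\tilde Y^{\varepsilon})$ cell by cell on the mesh $\{k\delta\}_{k\ge 0}$, exploiting that the fast component relaxes on the much shorter scale $\varepsilon$. First I would record, by rerunning the Gr\"onwall argument used for the a priori bounds (Lemma~1) almost verbatim — the only extra input being the $\varepsilon$-uniform moment bound \eqref{intro} for the rescaled $\alpha$-stable convolution, which is exactly what forces $1<p<\alpha$ — that $\sup_{0\le t\le T}\mathbb E|\tilde X^{\varepsilon}_t|^{p}<\infty$ and $\sup_{0\le t\le T}\mathbb E|\tilde Y^{\varepsilon}_t|^{p}<\infty$ uniformly in $\varepsilon\in(0,\varepsilon_0)$; in particular $(\mathbb E|Y^{\varepsilon}_{k\delta}-\tilde Y^{\varepsilon}_{k\delta}|^{p})^{1/p}$ is bounded by a constant.

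Since the two slow equations carry the \emph{same} additive noise $\frac{\sigma_1+\sigma_2}{2}L^{\alpha}_t$, the difference $X^{\varepsilon}_t-\tilde X^{\varepsilon}_t$ is a plain Lebesgue integral, so Minkowski's inequality and the Lipschitz bound of Hypothesis \textbf{H.1} give
\[
\bigl(\mathbb E|X^{\varepsilon}_t-\tilde X^{\varepsilon}_t|^{p}\bigr)^{1/p}\le 2L\int_0^t\bigl(\mathbb E|X^{\varepsilon}_s-X^{\varepsilon}_{[s/\delta]\delta}|^{p}\bigr)^{1/p}ds+2L\int_0^t\bigl(\mathbb E|\Psi^{\varepsilon}_s|^{p}\bigr)^{1/p}ds ,
\]
with $\Psi^{\varepsilon}:=Y^{\varepsilon}-\tilde Y^{\varepsilon}$. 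The first integrand is $\le C_T\delta^{\theta}$ straight from the time-regularity estimate of Lemma~\ref{path} (the drift part of the short increment being handled by the growth bound \eqref{0b} together with the moment estimates), so the whole problem reduces to controlling $\int_0^T(\mathbb E|\Psi^{\varepsilon}_t|^{p})^{1/p}dt$.

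For $\Psi^{\varepsilon}$, fix $t\in[k\delta,(k+1)\delta)$ and write both $Y^{\varepsilon}$ and $\tilde Y^{\varepsilon}$ in variation-of-constants form from time $k\delta$. The key point is that on this cell both are driven by the \emph{identical} stochastic convolution $\frac{\sigma_1-\sigma_2}{2\varepsilon^{1/\alpha}}\int_{k\delta}^t e^{-2(t-s)/\varepsilon}dL^{\alpha}_s$, which therefore cancels in the difference, leaving the \emph{pathwise} identity
\[
\Psi^{\varepsilon}_t=e^{-\frac{2(t-k\delta)}{\varepsilon}}\Psi^{\varepsilon}_{k\delta}+\frac{1}{\varepsilon}\int_{k\delta}^t e^{-\frac{2(t-s)}{\varepsilon}}\frac{\varepsilon^{1-1/\alpha}}{2}\Bigl[(F-G)(X^{\varepsilon}_s,Y^{\varepsilon}_s,\varepsilon)-(F-G)(X^{\varepsilon}_{k\delta},\tilde Y^{\varepsilon}_s,\varepsilon)\Bigr]ds .
\]
Minkowski plus Hypothesis \textbf{H.1}, together with $(\mathbb E|X^{\varepsilon}_s-X^{\varepsilon}_{k\delta}|^{p})^{1/p}\le C_T\delta^{\theta}$ (Lemma~\ref{path}) and the constant bound on $(\mathbb E|\Psi^{\varepsilon}_{k\delta}|^{p})^{1/p}$, give a linear integral inequality for $u_t:=(\mathbb E|\Psi^{\varepsilon}_t|^{p})^{1/p}$ whose Gr\"onwall solution has the form $u_t\le C e^{-(2-L\varepsilon^{1-1/\alpha})(t-k\delta)/\varepsilon}+C\delta^{\theta}e^{C(t-k\delta)/\varepsilon}$, valid as soon as $\varepsilon_0$ is chosen with $2-L\varepsilon^{1-1/\alpha}\ge 1$. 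Integrating over $[k\delta,(k+1)\delta)$, the first term contributes $O(\varepsilon)$ (the fast-relaxation gain) and the second $O(\varepsilon\delta^{\theta}e^{C\delta/\varepsilon})$; summing the $\lceil T/\delta\rceil$ cells yields $\int_0^T u_t\,dt\le C_T(\varepsilon/\delta+\varepsilon\delta^{-(1-\theta)}e^{C\delta/\varepsilon})$, which is the first assertion, and substituting this and the $C_T\delta^{\theta}$ bound into the displayed inequality for $X^{\varepsilon}-\tilde X^{\varepsilon}$ and integrating in $t$ gives the second.

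The main obstacle is this $\Psi^{\varepsilon}$ estimate: one must play the exponential mixing of the frozen fast Ornstein--Uhlenbeck flow (rate $2/\varepsilon$), which supplies one factor $\varepsilon$ per cell, against the Gr\"onwall blow-up $e^{C\delta/\varepsilon}$ from the slow-feedback term, all with constants strictly independent of $\varepsilon$ — hence the need to fix $\varepsilon_0$ so that $2-L\varepsilon^{1-1/\alpha}$ stays bounded away from $0$ and to invoke the $\varepsilon$-uniform bound \eqref{intro} throughout. The surviving (possibly large) factor $\delta^{-(1-\theta)}e^{C\delta/\varepsilon}$ is expected and harmless: $\delta$ will be coupled to $\varepsilon$ later so that this quantity vanishes in the averaging limit.
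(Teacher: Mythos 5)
Your proposal is correct and follows essentially the same route as the paper: the same cell-by-cell decomposition on the mesh $\{k\delta\}$, cancellation of the common additive noise in both differences, Minkowski plus the Lipschitz bound of \textbf{H.1}, a Gr\"onwall inequality on each cell yielding $Ce^{-(2-L\varepsilon^{1-1/\alpha})(t-k\delta)/\varepsilon}+C\delta^{\theta}e^{C(t-k\delta)/\varepsilon}$, and summation over the $O(T/\delta)$ cells. The only cosmetic difference is that you invoke Lemma~\ref{path} directly for the increment $X^{\varepsilon}_{s}-X^{\varepsilon}_{[s/\delta]\delta}$ where the paper re-derives the same $C\delta^{\theta}$ bound by splitting into drift and noise parts, and you are slightly more explicit about choosing $\varepsilon_0$ so that $2-L\varepsilon^{1-1/\alpha}$ stays bounded away from zero.
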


\begin{lemma}
\label{lipc}
Under Hypotheses $\bf{H.1}$-$\bf{H.3}$, there exists a positive constant $\varepsilon_0$, such that for any $\varepsilon \in (0, \varepsilon_0)$, the functions $\bar{F}$ and $\bar{G}$ satisfy Lipschitz condition, where
\begin{equation}
\label{0c}
\begin{aligned}
&\bar{F}(x,\varepsilon)=\int F(x,y,\varepsilon)\mu^{\varepsilon}_{x}(dy), ~~\bar{F}(x)=\lim_{\varepsilon\rightarrow 0}\bar{F}(x,\varepsilon), \\
&\bar{G}(x,\varepsilon)=\int G(x,y,\varepsilon)\mu^{\varepsilon}_{x}(dy), ~~\bar{G}(x)=\lim_{\varepsilon\rightarrow 0}\bar{G}(x,\varepsilon). \\
\end{aligned}
\end{equation}
\end{lemma}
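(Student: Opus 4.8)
The plan is to establish a Lipschitz bound for $\bar F(\cdot,\varepsilon)$ and $\bar G(\cdot,\varepsilon)$ with a constant that does \emph{not} depend on $\varepsilon\in(0,\varepsilon_0)$, and then pass to the limit $\varepsilon\to0$. The starting point is the identity, valid by Lemma \ref{avv} and the estimates in its proof (which use only the Lipschitz bound H.1, not boundedness of the coefficients), that for any fixed $x$, $\varepsilon$ and any $y_0$,
\[
\bar F(x,\varepsilon)=\lim_{t\to\infty}\mathbb{E}\,F\big(x,Y^{\varepsilon}_{t,x}(y_0),\varepsilon\big),\qquad \bar G(x,\varepsilon)=\lim_{t\to\infty}\mathbb{E}\,G\big(x,Y^{\varepsilon}_{t,x}(y_0),\varepsilon\big);
\]
the linear growth of $G$ causes no trouble here because this convergence rests on the contraction estimate \eqref{c3} together with the uniform bound $\int_{\mathbb{R}^n}|y|^p\mu^{\varepsilon}_x(dy)\le C_T$, not on boundedness of $G$. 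Thus the whole matter reduces to controlling $Y^{\varepsilon}_{t,x_1}(y_0)-Y^{\varepsilon}_{t,x_2}(y_0)$ when the two frozen fast equations \eqref{c0} are driven by the \emph{same} $\alpha$-stable process and started from the same $y_0$.

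First I would subtract the two copies of \eqref{c0}: the stochastic integrals and the initial data cancel, leaving
\[
Y^{\varepsilon}_{t,x_1}(y_0)-Y^{\varepsilon}_{t,x_2}(y_0)=\frac{\varepsilon^{1-\frac1\alpha}}{2\varepsilon}\int_0^t e^{-\frac2\varepsilon(t-s)}\Big[F(x_1,Y^{\varepsilon}_{s,x_1},\varepsilon)-F(x_2,Y^{\varepsilon}_{s,x_2},\varepsilon)-G(x_1,Y^{\varepsilon}_{s,x_1},\varepsilon)+G(x_2,Y^{\varepsilon}_{s,x_2},\varepsilon)\Big]ds.
\]
Taking $L^p$-norms, using Minkowski's inequality, the Lipschitz bound H.1 and $\frac1\varepsilon\int_0^t e^{-\frac2\varepsilon(t-s)}ds\le\frac12$, the quantity $\phi(t):=\big(\mathbb{E}|Y^{\varepsilon}_{t,x_1}(y_0)-Y^{\varepsilon}_{t,x_2}(y_0)|^p\big)^{1/p}$ obeys
\[
\phi(t)\le \tfrac12 L\varepsilon^{1-\frac1\alpha}|x_1-x_2|+L\varepsilon^{-\frac1\alpha}\int_0^t e^{-\frac2\varepsilon(t-s)}\phi(s)\,ds.
\]
Multiplying by $e^{2t/\varepsilon}$ and applying Gr\"onwall's inequality, exactly as in the proof of Lemma \ref{avv}, and using that $2/\varepsilon-L\varepsilon^{-1/\alpha}>0$ for small $\varepsilon$ (here $1<\alpha<2$, so $1/\alpha<1$), I obtain
\[
\phi(t)\le \frac{L\varepsilon^{1-\frac1\alpha}}{2-L\varepsilon^{1-\frac1\alpha}}\,|x_1-x_2|\le L|x_1-x_2|\qquad\text{for all }t\ge0,\ \varepsilon\in(0,\varepsilon_0),
\]
provided $\varepsilon_0$ is chosen so that $L\varepsilon_0^{1-1/\alpha}\le 1$.

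Then I would combine the two steps: for $\Phi\in\{F,G\}$, H.1 and the bound on $\phi$ give $\mathbb{E}\big|\Phi(x_1,Y^{\varepsilon}_{t,x_1}(y_0),\varepsilon)-\Phi(x_2,Y^{\varepsilon}_{t,x_2}(y_0),\varepsilon)\big|\le L|x_1-x_2|+L\phi(t)\le 2L|x_1-x_2|$, and letting $t\to\infty$ yields $|\bar F(x_1,\varepsilon)-\bar F(x_2,\varepsilon)|\le 2L|x_1-x_2|$ and $|\bar G(x_1,\varepsilon)-\bar G(x_2,\varepsilon)|\le 2L|x_1-x_2|$, uniformly in $\varepsilon\in(0,\varepsilon_0)$. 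Since $\bar F(x)=\lim_{\varepsilon\to0}\bar F(x,\varepsilon)$ and $\bar G(x)=\lim_{\varepsilon\to0}\bar G(x,\varepsilon)$, the $\varepsilon$-uniformity lets me pass to the limit and conclude $|\bar F(x_1)-\bar F(x_2)|\le 2L|x_1-x_2|$ and $|\bar G(x_1)-\bar G(x_2)|\le 2L|x_1-x_2|$, which is the claim.

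The step I expect to be delicate is the Gr\"onwall argument with the $\varepsilon$-dependent kernel: the factor $\varepsilon^{-1/\alpha}$ produced by $\frac1\varepsilon\cdot\varepsilon^{1-1/\alpha}$ must be dominated by the decay rate $2/\varepsilon$, which is exactly where $\alpha>1$ and the smallness of $\varepsilon_0$ enter; this is the same mechanism already used in Lemma \ref{avv}, so no new idea is required, only care with the constants. A secondary point needing attention is the justification that $\bar G(x,\varepsilon)$ is the long-time limit of $\mathbb{E}\,G(x,Y^{\varepsilon}_{t,x}(y_0),\varepsilon)$ even though $G$ only has linear growth, but as noted this follows from \eqref{c3} and the uniform $p$-th moment estimate rather than from boundedness of $G$.
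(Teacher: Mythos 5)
Your proof is correct, but it takes a genuinely different route from the paper's. The paper represents $\bar{F}(\cdot,\varepsilon)$ through an ergodic time average along the frozen fast motion and controls $|Y^{\varepsilon}_{s,x_1}-Y^{\varepsilon}_{s,x_2}|$ by the almost-sure gradient bound $\sup_{x,y,\varepsilon}|\nabla_{x}Y^{\varepsilon}_{t,x}|\le C_{T}$, which it imports from the derivative formulas for $\alpha$-stable SDEs in reference \cite{zh}; the Lipschitz constant of $\bar{F}(\cdot,\varepsilon)$ is inherited from that external estimate, and the passage from $\bar{F}(\cdot,\varepsilon)$ to $\bar{F}$ is the same three-term triangle inequality you use at the end. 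You instead identify $\bar{F}(x,\varepsilon)$ as the $t\to\infty$ limit of $\mathbb{E}F(x,Y^{\varepsilon}_{t,x}(y_0),\varepsilon)$ via Lemma \ref{avv}, and estimate the difference of the two frozen trajectories by a synchronous coupling (same driving noise and same $y_0$, so the stochastic convolution and the initial term cancel) followed by Gr\"onwall against the kernel $e^{-2(t-s)/\varepsilon}$. This is more elementary and self-contained: it uses only \textbf{H.1} and the explicit mild form \eqref{c0}, it yields the explicit $\varepsilon$-uniform constant $L\varepsilon^{1-1/\alpha}/(2-L\varepsilon^{1-1/\alpha})$ (so the contribution of the fast variable in fact vanishes as $\varepsilon\to 0$), and it makes visible exactly where $\alpha>1$ and the smallness of $\varepsilon_0$ enter, whereas the paper's argument buries the $\varepsilon$-uniformity inside the cited gradient estimate, whose applicability to the frozen equation is asserted rather than checked. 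Your remark on the linear growth of $G$ is also well taken: the paper does not address it, while your argument covers it because the convergence to the invariant average rests on the contraction \eqref{c3} and the moment bound on $\mu^{\varepsilon}_{x}$ rather than on boundedness of the coefficient. The only blemish is bookkeeping: with your stated bound $\phi(t)\le L|x_1-x_2|$ you get $L|x_1-x_2|+L\phi(t)\le L(1+L)|x_1-x_2|$ rather than $2L|x_1-x_2|$; the constant $2L$ does follow if you use the sharper consequence $\phi(t)\le|x_1-x_2|$ valid once $L\varepsilon_0^{1-1/\alpha}\le 1$, and in any case this does not affect the conclusion.
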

\begin{proof}
As $\mu^{\varepsilon}_{x}$ is ergodic, for any $h\in \mathbb{R}^{n}$, $x_1, x_2 \in \mathbb{R}^{n}$ and $t>0$, we have
\begin{equation}
\label{0f}
\begin{aligned}
&\frac{1}{t}\left|\langle F(x_1, Y^{\varepsilon}_{t,x_1}, \varepsilon)-F(x_2, Y^{\varepsilon}_{t,x_2}, \varepsilon),h\rangle\right|\\
&\leq \frac{L}{t}\int^{t}_{0}\left[|x_1-x_2|+|Y^{\varepsilon}_{s,x_1}- Y^{\varepsilon}_{s,x_2}|\right]ds \cdot |h|\\
&\leq L|h|\left(|x_1-x_2|+\sup_{x,y,\varepsilon}|\nabla_{x}Y^{\varepsilon}_{t,x}||x_1-x_2|\right).
\end{aligned}
\end{equation}
Hence, thank to Hypothesis $\bf{H.3}$ and \cite[Theorem 1.1]{zh}, it is immediate to check that for any $t \in [0,T]$, we have
\begin{equation}
\label{0g}
\sup_{x,y,\varepsilon}|\nabla_{x}Y^{\varepsilon}_{t,x}|\leq C_{T}, ~~\mathbb{P}-a.s.
\end{equation}
Combined with $\eqref{0f}$ and $\eqref{0g}$, we have
\begin{equation}
\frac{1}{t}\left|\langle F(x_1, Y^{\varepsilon}_{t,x_1}, \varepsilon)-F(x_2, Y^{\varepsilon}_{t,x_2}, \varepsilon),h\rangle\right|\leq C|h||x_1-x_2|.
\end{equation}
Therefore we can conclude that $\bar{F}(x,\varepsilon)$ is Lipschitz. According to \eqref{0c} and \eqref{0f} , we have
\begin{equation}
\begin{aligned}
|\bar{F}(x_1)-\bar{F}(x_1)|& \leq |\bar{F}(x_1)-\bar{F}(x_1, \varepsilon)|+|\bar{F}(x_1,\varepsilon)-\bar{F}(x_2, \varepsilon)|+|\bar{F}(x_2)-\bar{F}(x_2, \varepsilon)| \\
& \leq C|x_1-x_2|.
\end{aligned}
\end{equation}
Similarly, the function $\bar{G}$ also satisfies Lipschitz condition.
\end{proof}

\begin{theorem}
Under Hypotheses $\bf{H.1}$-$\bf{H.3}$,  the slow component $X^{\varepsilon}_t$ converges to $\bar{X}_t$ in $L^{p}(1<p<\alpha)$, i.e.,
\begin{equation}
\lim_{\varepsilon\rightarrow 0}\mathbb{E}|X^{\varepsilon}_t-\bar{X}_t|^{p}=0,
\end{equation}
and
\begin{equation}
\label{average}
d\bar{X}_t=\frac{1}{2}\bar{F}(\bar{X}_t)dt+\frac{1}{2}\bar{G}(\bar{X}_t)dt+\frac{(\sigma_1+\sigma_2)}{2}dL^{\alpha}_t,
\end{equation}
\end{theorem}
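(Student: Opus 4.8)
The plan is to run Khasminskii's averaging scheme, using the auxiliary pair $(\tilde X^{\varepsilon}_t,\tilde Y^{\varepsilon}_t)$ from \eqref{xy} as a bridge between $X^{\varepsilon}_t$ and $\bar X_t$. Since Lemma~\ref{lipc} shows that $\bar F$ and $\bar G$ are globally Lipschitz, the averaged equation \eqref{average} has a unique solution $\bar X_t$ which, by repeating the arguments used for $X^{\varepsilon}$, enjoys the uniform $p$-th moment bound and the time regularity $(\mathbb{E}|\bar X_{t+h}-\bar X_t|^{p})^{1/p}\le C_T h^{\theta}$ (the analogue of Lemma~\ref{path}). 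Then, by the triangle inequality,
\begin{equation*}
\big(\mathbb{E}|X^{\varepsilon}_t-\bar X_t|^{p}\big)^{1/p}\le\big(\mathbb{E}|X^{\varepsilon}_t-\tilde X^{\varepsilon}_t|^{p}\big)^{1/p}+\big(\mathbb{E}|\tilde X^{\varepsilon}_t-\bar X_t|^{p}\big)^{1/p},
\end{equation*}
and, once integrated over $[0,T]$, the first term is already controlled by Lemma~\ref{lem5}. Hence everything reduces to estimating $D^{\varepsilon}(t):=\tilde X^{\varepsilon}_t-\bar X_t$.

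Because both $\tilde X^{\varepsilon}_t$ and $\bar X_t$ carry the same driving term $\tfrac{\sigma_1+\sigma_2}{2}L^{\alpha}_t$, the noise cancels and
\begin{equation*}
D^{\varepsilon}(t)=\tfrac12\int_0^t\Big[(F+G)\big(X^{\varepsilon}_{[s/\delta]\delta},\tilde Y^{\varepsilon}_s,\varepsilon\big)-(\bar F+\bar G)(\bar X_s)\Big]\,ds .
\end{equation*}
I would split the bracket into four groups: (a) $(F+G)(X^{\varepsilon}_{[s/\delta]\delta},\tilde Y^{\varepsilon}_s,\varepsilon)-(\bar F+\bar G)(X^{\varepsilon}_{[s/\delta]\delta},\varepsilon)$, the genuine averaging error; (b) $(\bar F+\bar G)(X^{\varepsilon}_{[s/\delta]\delta},\varepsilon)-(\bar F+\bar G)(\bar X_{[s/\delta]\delta},\varepsilon)$; (c) $(\bar F+\bar G)(\bar X_{[s/\delta]\delta},\varepsilon)-(\bar F+\bar G)(\bar X_s,\varepsilon)$; and (d) $(\bar F+\bar G)(\bar X_s,\varepsilon)-(\bar F+\bar G)(\bar X_s)$. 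Group (b) is bounded, via Lemma~\ref{lipc}, by $C\int_0^t(\mathbb{E}|X^{\varepsilon}_{[s/\delta]\delta}-\bar X_{[s/\delta]\delta}|^p)^{1/p}ds$; writing $X^{\varepsilon}-\bar X=(X^{\varepsilon}-\tilde X^{\varepsilon})+D^{\varepsilon}$ and using Lemma~\ref{lem5} for the first summand turns this into the Gr\"onwall term for $(\mathbb{E}|D^{\varepsilon}(\cdot)|^p)^{1/p}$ plus a controlled remainder. Group (c) is $\le C\,\delta^{\theta}T$ by the Hölder bound on $\bar X$ and the Lipschitz property of $\bar F,\bar G$. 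Group (d) is handled by a standard truncation: on $\{|\bar X_s|\le N\}$ it is bounded by $\eta_N(\varepsilon):=\sup_{|x|\le N}|(\bar F+\bar G)(x,\varepsilon)-(\bar F+\bar G)(x)|\to0$ (locally uniform convergence following from the equi-Lipschitz bound of Lemma~\ref{lipc} together with the pointwise limit in \eqref{0c}), while the tail $\{|\bar X_s|>N\}$ is controlled by the uniform moment bound of Lemma~1 and then $N\to\infty$.

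The heart is group (a). Here I use the block structure of the auxiliary process: for $s\in[k\delta,(k+1)\delta)$, conditioning on $\mathcal F_{k\delta}$ and invoking the Markov property, $\tilde Y^{\varepsilon}_s$ coincides with the frozen fast process started from $\tilde Y^{\varepsilon}_{k\delta}$ with parameter $x=X^{\varepsilon}_{k\delta}$; by the exponential ergodicity of Lemma~\ref{avv} (the initial datum $\tilde Y^{\varepsilon}_{k\delta}$ being washed out by the contraction \eqref{c3}), one obtains
\begin{equation*}
\Big|\mathbb{E}\big[(F+G)(X^{\varepsilon}_{k\delta},\tilde Y^{\varepsilon}_s,\varepsilon)\,\big|\,\mathcal F_{k\delta}\big]-(\bar F+\bar G)(X^{\varepsilon}_{k\delta},\varepsilon)\Big|\le C_T\,e^{-2(s-k\delta)/\varepsilon}\big(1+|\tilde Y^{\varepsilon}_{k\delta}|\big).
\end{equation*}
Integrating over a block contributes a factor $C_T\varepsilon$; summing over the $\lceil T/\delta\rceil$ blocks and using the uniform moment bound on $\tilde Y^{\varepsilon}$ from \eqref{bounded} bounds the $L^p$-norm of the group-(a) contribution by $C_T\,\varepsilon/\delta$, uniformly in $t\in[0,T]$.

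Collecting (a)--(d) and applying Gr\"onwall's inequality to $t\mapsto(\mathbb{E}|D^{\varepsilon}(t)|^p)^{1/p}$, then feeding the outcome back through Lemma~\ref{lem5}, yields
\begin{equation*}
\int_0^T\big(\mathbb{E}|X^{\varepsilon}_t-\bar X_t|^{p}\big)^{1/p}dt\le C_T\Big(\delta^{\theta}+\frac{\varepsilon}{\delta}+\varepsilon\,\delta^{-(1-\theta)}e^{C\delta/\varepsilon}+\eta(\varepsilon)\Big),
\end{equation*}
with $\eta(\varepsilon)\to0$. Choosing $\delta=\delta(\varepsilon)\to0$ so that $\delta/\varepsilon\to\infty$ only logarithmically (e.g. $\delta\sim\varepsilon|\log\varepsilon|$ with a sufficiently small proportionality constant, to make $\varepsilon\,\delta^{-(1-\theta)}e^{C\delta/\varepsilon}\to0$) drives every term to zero, so $\int_0^T\mathbb{E}|X^{\varepsilon}_t-\bar X_t|^{p}dt\to0$; combined with the equi-Hölder regularity of $X^{\varepsilon}$ (Lemma~\ref{path}) and of $\bar X$, this upgrades to $\sup_{0\le t\le T}\mathbb{E}|X^{\varepsilon}_t-\bar X_t|^{p}\to0$, in particular the pointwise claim. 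The main obstacle is group (a): one must quantify the convergence to the \emph{$\varepsilon$-dependent} invariant measure $\mu^{\varepsilon}_x$ with constants uniform in $\varepsilon$ — which is exactly the content of Lemma~\ref{avv} — and then reconcile the resulting $\varepsilon/\delta$ rate with the $e^{C\delta/\varepsilon}$ factor produced by the auxiliary-process comparison in Lemma~\ref{lem5}, as this is what constrains the admissible scaling $\delta=\delta(\varepsilon)$.
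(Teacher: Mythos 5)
Your overall architecture coincides with the paper's: split via the auxiliary pair $(\tilde X^{\varepsilon},\tilde Y^{\varepsilon})$, control $X^{\varepsilon}-\tilde X^{\varepsilon}$ by Lemma~\ref{lem5}, use the Lipschitz property of $\bar F,\bar G$ from Lemma~\ref{lipc} and H\"older-in-time regularity for the intermediate terms, close with Gr\"onwall, and take $\delta\sim\varepsilon|\log\varepsilon|^{1/2}$. Your group (d) (the discrepancy between $\bar F(\cdot,\varepsilon)$ and $\bar F(\cdot)$) is a point the paper in fact glosses over, and your truncation argument there is a reasonable way to handle it.

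However, there is a genuine gap precisely at the step you yourself identify as the heart of the matter, group (a). Writing $h_s:=(F+G)(X^{\varepsilon}_{k\delta},\tilde Y^{\varepsilon}_s,\varepsilon)-(\bar F+\bar G)(X^{\varepsilon}_{k\delta},\varepsilon)$, your exponential-ergodicity bound controls $\bigl|\mathbb{E}[h_s\mid\mathcal F_{k\delta}]\bigr|$, and integrating it over a block indeed gives $C\varepsilon(1+|\tilde Y^{\varepsilon}_{k\delta}|)$ — but this only bounds $\int\mathbb{E}[h_s\mid\mathcal F_{k\delta}]\,ds$, whereas the quantity you must estimate is $\bigl(\mathbb{E}\bigl|\int h_s\,ds\bigr|^{p}\bigr)^{1/p}$, with the absolute value \emph{outside} the conditional expectation. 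Jensen's inequality goes the wrong way here: the fluctuation $h_s-\mathbb{E}[h_s\mid\mathcal F_{k\delta}]$ is not small pointwise (it is $O(1)$ under H.3), and its time integral over a block is only small because of decorrelation of the fast process, not because of the decay of the conditional mean. This is exactly why the paper passes to the second moment and expands
\begin{equation*}
\mathbb{E}\Bigl|\int_{k\delta}^{(k+1)\delta}h_s\,ds\Bigr|^{2}
=\int\!\!\int \Upsilon_k(r,s)\,dr\,ds ,
\end{equation*}
bounding the correlation $\Upsilon_k(r,s)\le C e^{-2(r-s)/\varepsilon}$ by conditioning on $\mathcal F_{s}$ (so the Markov property and Lemma~\ref{avv} act on the \emph{inner} factor while the outer factor is controlled by boundedness of $F$), and only then using $L^{p}\le L^{2}$ for $p<\alpha<2$. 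Without this double-integral correlation argument (or an equivalent martingale/quadratic-variation estimate for $\int (h_s-\mathbb{E}[h_s\mid\mathcal F_{k\delta}])\,ds$), your claimed bound $C_T\varepsilon/\delta$ for the $L^{p}$-norm of the group-(a) contribution is not justified, and the proof does not close. The rest of your scheme is sound once this step is repaired along the paper's lines.
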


\begin{proof}
Step 1. By the equations \eqref{slo} and \eqref{average}, we have
\begin{equation}
\begin{aligned}
\left(\mathbb{E}|X^{\varepsilon}_t-\bar{X}_t|^{p}\right)^{\frac{1}{p}}&\leq \left(\mathbb{E}|X^{\varepsilon}_t-\tilde{{X}}^{\varepsilon}_t|^{p}\right)^{\frac{1}{p}}+\left(\mathbb{E}|\tilde{{X}}^{\varepsilon}_t-\bar{{X}}_t|^{p}\right)^{\frac{1}{p}}\\
&\leq C_{T} \left(\delta^{\theta}+\frac{\varepsilon}{\delta}+\varepsilon \delta^{-(1-\theta)}e^{C\delta/\varepsilon}\right)+\left(\mathbb{E}|\tilde{{X}}^{\varepsilon}_t-\bar{{X}}_t|^{p}\right)^{\frac{1}{p}}.
\end{aligned}
\end{equation}
Meanwhile,
\begin{equation}
\label{req}
\begin{aligned}
\left(\mathbb{E}|\tilde{{X}}^{\varepsilon}_t-\bar{{X}}_t|^{p}\right)^{\frac{1}{p}}
&=\left(\mathbb{E}\left|\frac{1}{2}\int^{t}_{0}\left[F\left(X^{\varepsilon}_{[{s}/{\delta}]\delta},\tilde{{Y}}^{\varepsilon}_s,\varepsilon\right)
-\bar{F}(\bar{X}_{s})\right]ds+\frac{1}{2}\int^{t}_{0}\left[G\left(X^{\varepsilon}_{[{s}/{\delta}]\delta},\tilde{{Y}}^{\varepsilon}_s,\varepsilon\right)
-\bar{G}(\bar{X}_{s})\right]ds\right|^{p}\right)^{\frac{1}{p}}\\
&\leq \int^{t}_{0}\left(\mathbb{E}\left|F\left(X^{\varepsilon}_{[{s}/{\delta}]\delta},\tilde{{Y}}^{\varepsilon}_s,\varepsilon\right)
-\bar{F}(\bar{X}_{s})\right|^{p}\right)^{\frac{1}{p}}ds+\int^{t}_{0}\left(\mathbb{E}\left|G\left(X^{\varepsilon}_{[{s}/{\delta}]\delta},\tilde{{Y}}^{\varepsilon}_s,\varepsilon\right)
-\bar{G}(\bar{X}_{s})\right|^{p}\right)^{\frac{1}{p}}ds\\
&\leq \int^{t}_{0}\left(\mathbb{E}\left|F\left(X^{\varepsilon}_{[{s}/{\delta}]\delta},\tilde{{Y}}^{\varepsilon}_s,\varepsilon\right)
-\bar{F}({X}^{\varepsilon}_{[{s}/{\delta}]\delta})\right|^{p}\right)^{\frac{1}{p}}ds
+\int^{t}_{0}\left(\mathbb{E}\left|\bar{F}\left({X}^{\varepsilon}_{[{s}/{\delta}]\delta}\right)
-\bar{F}(X^{\varepsilon}_{s})\right|^{p}\right)^{\frac{1}{p}}ds\\
&+\int^{t}_{0}\left(\mathbb{E}\left|\bar{F}({X}^{\varepsilon}_{s})
-\bar{F}(\tilde{{X}}^{\varepsilon}_{s})\right|^{p}\right)^{\frac{1}{p}}ds
+\int^{t}_{0}\left(\mathbb{E}\left|\bar{F}(\tilde{X}^{\varepsilon}_{s})
-\bar{F}(\bar{{X}}_{s})\right|^{p}\right)^{\frac{1}{p}}ds\\
&+\int^{t}_{0}\left(\mathbb{E}\left|G\left(X^{\varepsilon}_{[{s}/{\delta}]\delta},\tilde{{Y}}^{\varepsilon}_s,\varepsilon\right)
-\bar{G}({X}^{\varepsilon}_{[{s}/{\delta}]\delta})\right|^{p}\right)^{\frac{1}{p}}ds
+\int^{t}_{0}\left(\mathbb{E}\left|\bar{G}\left({X}^{\varepsilon}_{[{s}/{\delta}]\delta}\right)
-\bar{G}\left(X^{\varepsilon}_{s}\right)\right|^{p}\right)^{\frac{1}{p}}ds \\
&+\int^{t}_{0}\left(\mathbb{E}\left|\bar{G}({X}^{\varepsilon}_{s})
-\bar{G}(\tilde{{X}}^{\varepsilon}_{s})\right|^{p}ds\right)^{\frac{1}{p}}
+\int^{t}_{0}\left(\mathbb{E}\left|\bar{G}(\tilde{X}^{\varepsilon}_{s})
-\bar{G}(\bar{{X}}_{s})\right|^{p}\right)^{\frac{1}{p}}ds\\
&=:I_1+I_2+I_3+I_4+J_1+J_2+J_3+J_4.
\end{aligned}
\end{equation}
For $I_1$, we have
\begin{equation}
\label{i1}
\begin{aligned}
&\int^{t}_{0}\left(\mathbb{E}\left|F\left(X^{\varepsilon}_{[{s}/{\delta}]\delta},\tilde{{Y}}^{\varepsilon}_s,\varepsilon\right)
-\bar{F}({X}^{\varepsilon}_{[{s}/{\delta}]\delta})\right|^{p}\right)^{\frac{1}{p}}ds \\
& \leq \sum^{[{t}/{\delta}]}_{0}\left(\mathbb{E}\left|\int^{(k+1)\delta}_{k\delta}\left(F\left(X^{\varepsilon}_{k\delta},\tilde{{Y}}^{\varepsilon}_s,\varepsilon\right)
-\bar{F}\left({X}^{\varepsilon}_{k\delta}\right)\right)ds\right|^{p}\right)^{\frac{1}{p}}\\
&\leq \varepsilon \sum^{[{t}/{\delta}]}_{0}\left(\int^{\frac{\delta}{\varepsilon}}_{0}\int^{\frac{\delta}{\varepsilon}}_{s}\Upsilon_{k}(r,s)drds\right)^{\frac{1}{2}},
\end{aligned}
\end{equation}
where $t:=\left([t/\delta]+1\right)\delta$ and
\begin{equation}
\begin{aligned}
\Upsilon_{k}(r,s)&=\mathbb{E}\left\langle \left(F\left(X^{\varepsilon}_{k\delta},\tilde{{Y}}^{\varepsilon}_{r\varepsilon+k\delta},\varepsilon\right)
-\bar{F}\left({X}^{\varepsilon}_{k\delta}\right)\right),  \left(F\left(X^{\varepsilon}_{k\delta},\tilde{{Y}}^{\varepsilon}_{s\varepsilon+k\delta},\varepsilon\right)
-\bar{F}\left({X}^{\varepsilon}_{k\delta}\right)\right)\right\rangle.
\end{aligned}
\end{equation}
For any $s\in(0,\delta)$, from the equation \eqref{xy}, we have
\begin{equation}
\begin{aligned}
\tilde{Y}^{\varepsilon}_{s+k\delta}&=e^{\frac{-2s}{\varepsilon}}\tilde{Y}^{\varepsilon}_{k\delta}+\frac{1}{\varepsilon}\int_{0}^{s}e^{\frac{-2(s-u)}{\varepsilon}}
\left[\frac{1}{2}\varepsilon^{1-\frac{1}{\alpha}}\left(F\left({X}^{\varepsilon}_{k\delta},\tilde{Y}^{\varepsilon}_{k\delta+u},\varepsilon\right)
-G\left({X}^{\varepsilon}_{k\delta},\tilde{Y}^{\varepsilon}_{k\delta+u},\varepsilon\right)\right)\right]ds \\
&+\frac{\sigma_1-\sigma_2}{2\varepsilon^{\frac{1}{\alpha}}}\int_{0}^{s}e^{\frac{-2(s-u)}{\varepsilon}}d\widetilde{L}^{\alpha}_{u},
\end{aligned}
\end{equation}
where $\widetilde{L}^{\alpha}_{.}=\widetilde{L}^{\alpha}_{.+k\delta}-\widetilde{L}^{\alpha}_{k\delta}$ with filtration $\mathcal{F}_{.+k\delta}$.

Define the process $Y^{X^{\varepsilon}_{k\delta}, \tilde{Y}^{\varepsilon}_{k\delta}}_{s}$ by
\begin{equation}
\begin{aligned}
Y^{X^{\varepsilon}_{k\delta}, \tilde{Y}^{\varepsilon}_{k\delta}}_{\frac{s}{\varepsilon}}&=e^{\frac{-2s}{\varepsilon}}\tilde{Y}^{\varepsilon}_{k\delta}+\int^{\frac{s}{\varepsilon}}_{0}
e^{-2(s/\varepsilon-u)}\left[\frac{1}{2}\varepsilon^{1-\frac{1}{\alpha}}\left(F\left({X}^{\varepsilon}_{k\delta},{Y}^{X^{\varepsilon}_{k\delta}, \tilde{Y}^{\varepsilon}_{k\delta}}_{u},\varepsilon\right)
-G\left({X}^{\varepsilon}_{k\delta},{Y}^{X^{\varepsilon}_{k\delta}, \tilde{Y}^{\varepsilon}_{k\delta}}_{u},\varepsilon\right)\right)\right]du\\
&+\frac{\sigma_1-\sigma_2}{2}\int^{\frac{s}{\varepsilon}}_{0}e^{-2(s/\varepsilon-u)}d\widehat{{L}}^{\alpha}_{u},
\end{aligned}
\end{equation}
where $\widehat{{L}}^{\alpha}_{t}$ is symmetric $\alpha$-stable L\'evy processes. Moreover, $\widehat{{L}}^{\alpha}_{t}$ is independent of $L^{\alpha}_{t}$ and $\widetilde{L}^{\alpha}_{t}$.
\end{proof}
After a series of simple calculations, we have
\begin{equation}
\begin{aligned}
Y^{X^{\varepsilon}_{k\delta}, \tilde{Y}^{\varepsilon}_{k\delta}}_{\frac{s}{\varepsilon}}&=e^{\frac{-2s}{\varepsilon}}\tilde{Y}^{\varepsilon}_{k\delta}+\int^{s}_{0}
e^{-2(s-u)/\varepsilon}\left[\frac{1}{2}\varepsilon^{1-\frac{1}{\alpha}}\left(F\left({X}^{\varepsilon}_{k\delta},{Y}^{X^{\varepsilon}_{k\delta}, \tilde{Y}^{\varepsilon}_{k\delta}}_{\frac{u}{\varepsilon}},\varepsilon\right)
-G\left({X}^{\varepsilon}_{k\delta},{Y}^{X^{\varepsilon}_{k\delta}, \tilde{Y}^{\varepsilon}_{k\delta}}_{\frac{u}{\varepsilon}},\varepsilon\right)\right)\right]\\
&+\frac{\sigma_1-\sigma_2}{2\varepsilon^{\alpha}}\int^{s}_{0}e^{-2(s-u)/\varepsilon}d\check{{L}}^{\alpha}_{u},
\end{aligned}
\end{equation}
where $\check{{L}}^{\alpha}_{\cdot}=\varepsilon^{1/\alpha}\hat{L}^{\alpha}_{\cdot/\varepsilon}$.

By the self-similar property of $\alpha$-stable process, we know $\widetilde{Y}^{\varepsilon}_{s+k\delta}$ and $Y^{X^{\varepsilon}_{k\delta}, \widetilde{Y}^{\varepsilon}_{k\delta}}_{\frac{s}{\varepsilon}}$ have the same distribution, i.e.,
\begin{equation}
\label{dis}
\mathbb{P}(\widetilde{Y}^{\varepsilon}_{s+k\delta})=\mathbb{P}(Y^{X^{\varepsilon}_{k\delta}, \widetilde{Y}^{\varepsilon}_{k\delta}}_{\frac{s}{\varepsilon}}).
\end{equation}
Let
\begin{equation}
\mathcal{F}_{s}:=\sigma\{Y^{X^{\varepsilon}_{k\delta}, \widetilde{Y}^{\varepsilon}_{k\delta}}_{u},  u\leq s\},
\end{equation}
then for $r>s$, by the property of conditional expectation, Hypothesis $\bf{H.1}$ and Lemma \ref{lipc},  we have
\begin{equation}
\begin{aligned}
&\Upsilon_{k}(r,s)=\mathbb{E}\left\langle \left(F\left(X^{\varepsilon}_{k\delta},\tilde{{Y}}^{\varepsilon}_{r\varepsilon+k\delta},\varepsilon\right)
-\bar{F}\left({X}^{\varepsilon}_{k\delta}\right)\right),  \left(F\left(X^{\varepsilon}_{k\delta},\tilde{{Y}}^{\varepsilon}_{s\varepsilon+k\delta},\varepsilon\right)
-\bar{F}\left({X}^{\varepsilon}_{k\delta}\right)\right)\right\rangle \\
&=\mathbb{E}\left\langle \left(F\left(X^{\varepsilon}_{k\delta},{{Y}}^{X^{\varepsilon}_{k\delta}, \widetilde{Y}^{\varepsilon}_{k\delta}}_{r},\varepsilon\right)
-\bar{F}\left({X}^{\varepsilon}_{k\delta}\right)\right),  \left(F\left(X^{\varepsilon}_{k\delta},{{Y}}^{X^{\varepsilon}_{k\delta}, \widetilde{Y}^{\varepsilon}_{k\delta}}_{s},\varepsilon\right)
-\bar{F}\left({X}^{\varepsilon}_{k\delta}\right)\right)\right\rangle \\
&=\mathbb{E}\left\langle \left(F\left(X^{\varepsilon}_{k\delta},{{Y}}^{X^{\varepsilon}_{k\delta}, \widetilde{Y}^{\varepsilon}_{k\delta}}_{s},\varepsilon\right)
-\bar{F}\left({X}^{\varepsilon}_{k\delta}\right)\right)\left.\left(\mathbb{E}\left(F\left(z_1,{{Y}}^{X^{\varepsilon}_{k\delta}, {Y}^{\varepsilon}_{k\delta}}_{r-s}+z_2,\varepsilon\right)
-\bar{F}\left({X}^{\varepsilon}_{k\delta}\right)\right)\right)\right|_{z_2={{Y}}^{X^{\varepsilon}_{k\delta}, \widetilde{Y}^{\varepsilon}_{k\delta}}_{s}}^{z_1=X^{\varepsilon}_{k\delta}}\right\rangle \\
&= \mathbb{E}\left\langle\left( F\left(X^{\varepsilon}_{k\delta},{{Y}}^{X^{\varepsilon}_{k\delta}, \widetilde{Y}^{\varepsilon}_{k\delta}}_{s},\varepsilon\right)
-\bar{F}\left({X}^{\varepsilon}_{k\delta}\right)\right)\cdot \left.\left(\mathbb{E}\left(F\left(z_1,{{Y}}^{X^{\varepsilon}_{k\delta}, {Y}^{\varepsilon}_{k\delta}}_{r-s}+z_2,\varepsilon\right)
-\int\bar{F}\left({X}^{\varepsilon}_{k\delta}\right)\right)\right)\right|_{z_2={{Y}}^{X^{\varepsilon}_{k\delta}, \widetilde{Y}^{\varepsilon}_{k\delta}}_{s}}^{z_1=X^{\varepsilon}_{k\delta}}\right\rangle \\
&\leq \left\{C_{q}\mathbb{E}\left|F\left({X}^{\varepsilon}_{k\delta}, Y_s^{X^{\varepsilon}_{k\delta}, \widetilde{Y}^{\varepsilon}_{k\delta}},\varepsilon\right)\right|^{q}+C_q \mathbb{E}\left|\int F\left({X}^{\varepsilon}_{k\delta},y\right)\mu^\epsilon_{x}(dy) \right|^{q}\right\}^{\frac{1}{q}} \\
& ~~~~\left\{\mathbb{E}\left.\left(\left|\left(\mathbb{E}\left(F\left(z_1,{{Y}}^{X^{\varepsilon}_{k\delta}, \widetilde{Y}^{\varepsilon}_{k\delta}}_{r-s}+z_2,\varepsilon\right)-\bar{F}\left(z_1\right)\right)\right)\right|_{z_2={{Y}}^{X^{\varepsilon}_{k\delta}, \widetilde{Y}^{\varepsilon}_{k\delta}}_{s}}^{z_1=X^{\varepsilon}_{k\delta}}\right|^{p}\right)\right\}^{\frac{1}{p}}\\
&\leq \left\{\left(L+\bar{L}\right)C_{q}\mathbb{E}\left|X^{\varepsilon}_{k\delta}\right|^{q}+L C_{q}\mathbb{E}\left|Y^{X^{\varepsilon}_{k\delta},\widetilde{Y}^{\varepsilon}_{k\delta}}_{s}\right|^{q}\right\}^{\frac{1}{q}}\cdot \\
&~~~~\left\{\mathbb{E}\left.\left(\left|\left(\mathbb{E}\left(F\left(z_1,{{Y}}^{X^{\varepsilon}_{k\delta}, \widetilde{Y}^{\varepsilon}_{k\delta}}_{r-s}+z_2,\varepsilon\right)-\bar{F}\left(z_1\right)\right)\right)\right|_{z_2={{Y}}^{X^{\varepsilon}_{k\delta}, \widetilde{Y}^{\varepsilon}_{k\delta}}_{s}}^{z_1=X^{\varepsilon}_{k\delta}}\right|^{p}\right)\right\}^{\frac{1}{p}}\\
 &\leq C \mathbb{E}\left.\left(\left|\left(\mathbb{E}\left(F\left(z_1,{{Y}}^{X^{\varepsilon}_{k\delta}, \widetilde{Y}^{\varepsilon}_{k\delta}}_{r-s}+z_2,\varepsilon\right)-\bar{F}\left(z_1\right)\right)\right)\right|_{z_2={{Y}}^{X^{\varepsilon}_{k\delta}, \widetilde{Y}^{\varepsilon}_{k\delta}}_{s}}^{z_1=X^{\varepsilon}_{k\delta}}\right|^{p}\right)^{\frac{1}{p}}.
\end{aligned}
\end{equation}
By Lemma \ref{avv}, Lemma 2 and \eqref{bounded},  we have
\begin{equation}
\label{rs}
\begin{aligned}
\Upsilon_{k}(r,s)
&\leq C e^{-\frac{2(r-s)}{\varepsilon}}\mathbb{E}\left(|X^{\varepsilon}_{k\delta}|
+|\tilde{Y}^{X^{\varepsilon}_{k\delta},\widetilde{Y}^{\varepsilon}_{k\delta}}_{s}|\right)\\
&\leq C e^{-\frac{2(r-s)}{\varepsilon}}.
\end{aligned}
\end{equation}
Taking \eqref{rs} into \eqref{i1}, we have
\begin{equation}
\label{I_1}
I_1\leq C \varepsilon.
\end{equation}
For $I_2$, by the Lipschitz property of $\bar{F}$, we have
\begin{equation}
I_2\leq  C \delta^{\theta}.
\end{equation}
For $I_3$, by the Lipschitz property of $\bar{F}$ and Lemma \ref{lem5}, we have
\begin{equation}
I_3\leq  C\left(\delta^{\theta}+\frac{\varepsilon}{\delta}+\varepsilon \delta^{-(1-\theta)}e^{C\delta/\varepsilon}\right).
\end{equation}
For $I_4$, by the Lipschitz property of $\bar{F}$, we have
\begin{equation}
I_4\leq\int^{t}_{0}\left(\mathbb{E}\left|\tilde{X}^{\varepsilon}_{s}
-\bar{{X}}_{s}\right|^{p}\right)^{\frac{1}{p}}ds.
\end{equation}
Step 2.
For $J_1, J_2, J_3, J_4$, taking the same method as $I_1, I_2, I_3, I_4$ respectively, we have
\begin{equation}
\label{J}
\begin{aligned}
&J_1\leq C \varepsilon, ~~J_2 \leq  C  \delta^{\theta},\\
&J_3\leq  C \left( \frac{\varepsilon}{\delta}+\varepsilon \delta^{-(1-\theta)}e^{C\delta/\varepsilon}\right), \\
&J_4\leq  C \left(\int^{t}_{0}\left(\mathbb{E}\left|\tilde{X}^{\varepsilon}_{s}
-\bar{{X}}_{s}\right|^{p}\right)^{\frac{1}{p}}ds\right).
\end{aligned}
\end{equation}
Step 3. Taking \eqref{I_1}-\eqref{J} into \eqref{req}, and applying Gr\"onwall inequality in \eqref{req}, we obtain
\begin{equation}
\left(\mathbb{E}\left|X^{\varepsilon}_{t}-\bar{X}(t)\right|^{p}\right)^{\frac{1}{p}}\leq C\left( \delta^{\theta}+\frac{\varepsilon}{\delta}+\sqrt{\frac{\varepsilon}{\delta}}+\varepsilon\delta^{-(1-\theta)}e^{C\delta/\varepsilon}\right).
\end{equation}
Let $\delta=\varepsilon(-\ln\varepsilon)^{1/2}$ and taking $\varepsilon\rightarrow 0$, we yields
\begin{equation}
\lim_{\varepsilon\rightarrow 0}\left(\mathbb{E}|X^{\varepsilon}_t-\bar{X}_t|^{p}\right)^{\frac{1}{p}}=0.
\end{equation}
\begin{remark}
It is noteworthy that, equation \eqref{dis} is valid only for the finite dimensional stochastic differential equation \cite{ZB}.
\end{remark}

\section{The persistence of synchronization}

 In the following, we will illustrate the synchronization occurs, i.e.,  the  random attractor is a singleton.

\begin{lemma}
Under Hypotheses $\bf{H.1}$-$\bf{H.{3}}$, for fixed $Y^{\varepsilon}_{t}=y$, the random dynamical system $\varphi$ associated with \eqref{slo} have a compact random attractor $\mathcal{A}(\omega)$ consisting of a single point: $\mathcal{A}(\omega)=\{\eta_{0}(\omega)\}$.
\end{lemma}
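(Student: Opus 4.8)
The plan is to realize the slow equation \eqref{slo} (with the fast variable frozen at $Y_t^\varepsilon = y$) as a random dynamical system over the metric dynamical system generated by the two-sided $\alpha$-stable noise, and then to show this RDS is a contraction in a suitable pathwise sense. First I would fix $y$ and write \eqref{slo} as
\begin{equation*}
dX_t = \tfrac12\big[f(X_t+\varepsilon^{1/\alpha}y)+g(X_t-\varepsilon^{1/\alpha}y)\big]dt + \tfrac{\sigma_1+\sigma_2}{2}\,dL^\alpha_t,
\end{equation*}
and introduce the Ornstein--Uhlenbeck-type stationary process $\zeta(\theta_t\omega)$ solving $d\zeta = -\zeta\,dt + \tfrac{\sigma_1+\sigma_2}{2}\,dL^\alpha_t$, which exists as a tempered stationary orbit by the cited results on $\alpha$-stable Langevin equations (the integral $\int_{-\infty}^t e^{-(t-s)}dL^\alpha_s$ converges). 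Setting $V_t := X_t - \zeta(\theta_t\omega)$ removes the noise: $V_t$ solves the random ODE $\dot V_t = \tfrac12[f(V_t+\zeta+\varepsilon^{1/\alpha}y)+g(V_t+\zeta-\varepsilon^{1/\alpha}y)] + \zeta(\theta_t\omega)$, which generates a genuine (pathwise) cocycle $\varphi$ via the conjugacy $\varphi(t,\omega,x) = \psi(t,\omega,x-\zeta(\omega)) + \zeta(\theta_t\omega)$, where $\psi$ is the cocycle of the $V$-equation.

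Next I would establish the one-point attractor by a pathwise dissipativity-plus-contraction argument. For contraction: given two solutions $V_t, \tilde V_t$ of the random ODE with the same $\omega$, the difference $D_t = V_t - \tilde V_t$ satisfies
\begin{equation*}
\tfrac{d}{dt}\tfrac12|D_t|^2 = \tfrac12\big\langle D_t,\; f(V_t+\zeta+\varepsilon^{1/\alpha}y)-f(\tilde V_t+\zeta+\varepsilon^{1/\alpha}y) + g(\cdots)-g(\cdots)\big\rangle,
\end{equation*}
and by the global Lipschitz Hypothesis $\mathbf{H.1}$ on $f,g$ this is bounded by $L|D_t|^2$ — not yet a contraction. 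To get contraction one must use a one-sided dissipativity consequence of $\mathbf{H.2}$: writing $h := \tfrac12(f+g)$, condition \eqref{expo} applied to the original drifts together with Hypothesis $\mathbf{H.3}$ yields a one-sided Lipschitz estimate $\langle x_1-x_2, h(x_1)-h(x_2)\rangle \le -\kappa|x_1-x_2|^2$ for a constant $\kappa>0$ (this is exactly the deterministic dissipativity used in the classical synchronization results of Kloeden and Caraballo). Then $\tfrac{d}{dt}|D_t|^2 \le -2\kappa|D_t|^2$, so $|D_t|^2 \le e^{-2\kappa t}|D_0|^2 \to 0$. Dissipativity (existence of a tempered absorbing set) follows similarly by testing the $V$-equation against $V_t$ itself and using \eqref{expo} plus temperedness of $\zeta(\theta_t\omega)$.

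Finally, I would invoke the standard existence theorem for random attractors: a pathwise-continuous RDS on $\mathbb{R}^n$ possessing a tempered random absorbing set has a unique tempered random attractor $\mathcal{A}(\omega)$, given by $\mathcal{A}(\omega) = \bigcap_{s\ge 0}\overline{\bigcup_{t\ge s}\varphi(t,\theta_{-t}\omega, B(\theta_{-t}\omega))}$. The uniform exponential contraction just derived forces $\operatorname{diam}\big(\varphi(t,\theta_{-t}\omega, B(\theta_{-t}\omega))\big)\le e^{-\kappa t}\cdot(\text{tempered quantity}) \to 0$, so the attractor is a single point $\mathcal{A}(\omega) = \{\eta_0(\omega)\}$, and $\eta_0$ is the unique stationary orbit obtained as $\eta_0(\omega) = \psi\text{-limit} + \zeta(\omega)$. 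The main obstacle is the first step: producing the correct pathwise conjugacy for $\alpha$-stable (hence discontinuous, unbounded-variation) noise and verifying the cocycle property and measurability — the OU process $\zeta$ is only c\`adl\`ag and not uniformly bounded on finite time windows (the very obstruction noted in the Introduction), so one must be careful that the subtraction $X_t - \zeta(\theta_t\omega)$ still yields a well-defined RDS and that the dissipativity constant $\kappa$ from $\mathbf{H.2}$ genuinely dominates the Lipschitz constant $L$; I would either assume $\kappa$ large enough explicitly or absorb the $\varepsilon^{1/\alpha}y$ shift using $\mathbf{H.3}$ to keep the estimate uniform in the frozen parameter $y$.
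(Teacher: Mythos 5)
Your overall architecture --- conjugate away the noise by an $\alpha$-stable Ornstein--Uhlenbeck stationary orbit, prove pathwise contraction of the resulting random ODE, and conclude that the attractor degenerates to the single stationary point --- is exactly the method of the reference the paper cites for this lemma (Gess--Liu--R\"ockner), and the paper's own ``proof'' is nothing more than that one-line citation. So the route is the intended one. The problem is the step you lean on hardest.

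The contraction estimate $\langle x_1-x_2,\,h(x_1)-h(x_2)\rangle\le-\kappa|x_1-x_2|^2$ for $h=\tfrac12(f+g)$ does \emph{not} follow from Hypotheses $\mathbf{H.2}$ and $\mathbf{H.3}$, and this is a genuine gap rather than a technicality. Condition \eqref{expo} pairs the \emph{fast} variable $y$ with the \emph{difference} $f-g$ evaluated at a single point, and only for $|y|\ge R$; its role in the paper is to guarantee ergodicity of the fast equation (where the explicit $-\tfrac{2}{\varepsilon}Y^{\varepsilon}_t$ term already does most of the work), and it places no constraint whatsoever on the monotonicity of the sum $f+g$ as a function of the slow variable. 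Hypothesis $\mathbf{H.3}$ supplies only magnitude and gradient bounds, with no sign information. Under $\mathbf{H.1}$ alone the difference of two trajectories obeys $\tfrac{d}{dt}|D_t|^2\le 2L|D_t|^2$, i.e.\ possible exponential growth, and one can easily choose $f,g$ satisfying $\mathbf{H.1}$--$\mathbf{H.3}$ for which $f+g$ is expansive, so the singleton conclusion would fail. What is actually needed is an additional one-sided dissipativity (monotonicity) hypothesis on the slow drift, which is precisely the standing assumption in the cited reference and in the Caraballo--Kloeden synchronization literature; you recognize this at the end but resolve it only by proposing to ``assume $\kappa$ large enough,'' which amends the statement rather than proves it. A secondary caution: the paper's attractor notion (Definition 4) is a \emph{weak} attractor with convergence in probability, adopted exactly because the $\alpha$-stable OU process $\zeta(\theta_t\omega)$ is not pathwise uniformly bounded on finite windows (as the introduction stresses); your fully pathwise contraction argument targets a stronger conclusion, and the temperedness of $\zeta$ needed for your absorbing-set step should be checked against that obstruction, even though for the frozen slow equation (whose decay rate is not inflated by $1/\varepsilon$) this part is likely salvageable.
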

\begin{proof}
The proof is similar to \cite[Theorem 1.12]{wei}.
\end{proof}

Note that if the random attractor consists of singleton sets, i.e $ A(\omega) =\{\eta(\omega)\}$ for some random variable $\eta$, then $\eta_t(\omega) := \eta(\theta_t\omega)$ is a stationary stochastic process.


\begin{lemma}
\label{sta1}
Under Hypotheses $\bf{H.1}$-$\bf{H.{3}}$, let $\hat{\eta}^{\varepsilon}_t$ be the stationary stochastic solution of the slow component $X^{\varepsilon}_t$, and $\bar{\eta}_t$ be the stationary stochastic solution of $\bar{X}_t$, then we have
\begin{equation}
\lim_{\varepsilon\rightarrow 0}\mathbb{E}|\hat{\eta}^{\varepsilon}_t-\bar{\eta}_t|^{p}=0.
\end{equation}
\end{lemma}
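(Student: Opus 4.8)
The plan is to combine the stochastic averaging principle (Theorem~1) with the exponential attraction toward the two singleton random attractors, via a triple-$\varepsilon$ argument carried out by pulling the dynamics back from a far initial time. First note that, since $\hat\eta^{\varepsilon}_{t}$ and $\bar\eta_{t}$ are stationary solutions driven by the same symmetric $\alpha$-stable motion $L^{\alpha}_{t}$, the pair $(\hat\eta^{\varepsilon}_{t},\bar\eta_{t})$ is itself stationary, so $\mathbb{E}|\hat\eta^{\varepsilon}_{t}-\bar\eta_{t}|^{p}$ does not depend on $t$; it therefore suffices to bound it at $t=0$. Fix a deterministic $x_{0}$ (and some $y_{0}$ for the fast variable) and denote by $X^{\varepsilon}_{0}(-T,x_{0})$, resp. $\bar X_{0}(-T,x_{0})$, the value at time $0$ of the slow component of \eqref{slow1}, resp. of the solution of \eqref{average}, started at time $-T$ from $x_{0}$ and driven by the same $L^{\alpha}$. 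Then
\begin{align*}
\big(\mathbb{E}|\hat\eta^{\varepsilon}_{0}-\bar\eta_{0}|^{p}\big)^{\frac{1}{p}}
&\leq \big(\mathbb{E}|\hat\eta^{\varepsilon}_{0}-X^{\varepsilon}_{0}(-T,x_{0})|^{p}\big)^{\frac{1}{p}}
+\big(\mathbb{E}|X^{\varepsilon}_{0}(-T,x_{0})-\bar X_{0}(-T,x_{0})|^{p}\big)^{\frac{1}{p}}\\
&\quad +\big(\mathbb{E}|\bar X_{0}(-T,x_{0})-\bar\eta_{0}|^{p}\big)^{\frac{1}{p}}.
\end{align*}

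The first and third terms measure how fast solutions of the slow system \eqref{slo}, resp. of the averaged system \eqref{average}, collapse onto their singleton attractors; by the preceding lemma (and its analogue for \eqref{average}), these decay exponentially in $T$. The point that needs care is that for the slow system this decay rate must be \emph{uniform in $\varepsilon$}. Using Hypotheses $\bf{H.1}$-$\bf{H.3}$ and the uniform moment bound of Lemma~1, one writes the slow drift $\frac{1}{2}\big[f(X^{\varepsilon}+\varepsilon^{\frac{1}{\alpha}}Y^{\varepsilon})+g(X^{\varepsilon}-\varepsilon^{\frac{1}{\alpha}}Y^{\varepsilon})\big]$ as $\frac{1}{2}\big[f(X^{\varepsilon})+g(X^{\varepsilon})\big]$ plus a remainder that is $O(\varepsilon^{\frac{1}{\alpha}})$ in $L^{p}$, because the Lipschitz condition and $\big(\mathbb{E}|Y^{\varepsilon}_{t}|^{p}\big)^{\frac1p}\leq C(1+|y_{0}|)$ control it; the dissipativity of $f$ and $g$ that makes the attractor a single point then delivers $\big(\mathbb{E}|\hat\eta^{\varepsilon}_{0}-X^{\varepsilon}_{0}(-T,x_{0})|^{p}\big)^{\frac1p}\leq Ce^{-\gamma T}\big(1+|x_{0}|+(\mathbb{E}|\hat\eta^{\varepsilon}_{-T}|^{p})^{\frac1p}\big)$ with $C,\gamma>0$ independent of $\varepsilon$, while $\mathbb{E}|\hat\eta^{\varepsilon}_{-T}|^{p}$ is bounded uniformly in $\varepsilon$ and $T$ by the same dissipativity. (If the attractor lemma only supplies convergence in probability, this is upgraded to $L^{p}$ by the uniform integrability coming from the $L^{p'}$ bounds, $p<p'<\alpha$, of Lemma~1.) The corresponding term for \eqref{average} satisfies the simpler bound $\big(\mathbb{E}|\bar X_{0}(-T,x_{0})-\bar\eta_{0}|^{p}\big)^{\frac1p}\leq Ce^{-\gamma T}(1+|x_{0}|)$.

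Given $\eta>0$, choose $T$ large enough that the first and third terms are each below $\eta/3$ for every small $\varepsilon$, and fix this $T$. For the middle term, exploit that $L^{\alpha}_{t}$ has stationary and independent increments: the pair $\big(X^{\varepsilon}_{0}(-T,x_{0}),\bar X_{0}(-T,x_{0})\big)$ has the same law as $\big(X^{\varepsilon}_{T},\bar X_{T}\big)$ started at time $0$ from $x_{0}$, whence $\mathbb{E}|X^{\varepsilon}_{0}(-T,x_{0})-\bar X_{0}(-T,x_{0})|^{p}=\mathbb{E}|X^{\varepsilon}_{T}-\bar X_{T}|^{p}\to 0$ as $\varepsilon\to 0$ by Theorem~1, so for $\varepsilon$ small this term too is below $\eta/3$. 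Adding the three estimates gives $\big(\mathbb{E}|\hat\eta^{\varepsilon}_{0}-\bar\eta_{0}|^{p}\big)^{1/p}<\eta$ for all sufficiently small $\varepsilon$; since $\eta>0$ is arbitrary and the quantity is independent of $t$, this proves $\lim_{\varepsilon\to 0}\mathbb{E}|\hat\eta^{\varepsilon}_{t}-\bar\eta_{t}|^{p}=0$. The main obstacle, as indicated, is obtaining the uniform-in-$\varepsilon$ exponential attraction for the slow system: one must verify that the dissipativity responsible for the singleton attractor does not degenerate as $\varepsilon\to0$, and this is exactly where the special form of the transformation \eqref{transmation} together with the $\varepsilon$-independent moment estimates of Lemma~1 are needed.
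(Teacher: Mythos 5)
Your proof takes a genuinely different and more careful route than the paper's. The paper's own argument is two lines: it observes that $\hat{\eta}^{\varepsilon}_t$ and $\bar{\eta}_t$ satisfy the same SDEs as $X^{\varepsilon}_t$ and $\bar{X}_t$ respectively, and then asserts the conclusion ``by the similar method as Theorem 1.'' That buys brevity, but it quietly skips the one issue that actually distinguishes this lemma from Theorem 1: in Theorem 1 both processes are launched from the \emph{same} deterministic point $x_0$, whereas the two stationary solutions have different (random) values at time $0$, so a literal rerun of the Gr\"onwall argument produces the term $\mathbb{E}|\hat{\eta}^{\varepsilon}_0-\bar{\eta}_0|^p$ on the right-hand side --- the very quantity to be bounded. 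Your pullback-from-$-T$ decomposition into (attraction of $X^{\varepsilon}_0(-T,x_0)$ to $\hat{\eta}^{\varepsilon}_0$) $+$ (finite-time averaging on $[-T,0]$ via Theorem 1 and stationarity of the L\'evy increments) $+$ (attraction of $\bar{X}_0(-T,x_0)$ to $\bar{\eta}_0$) is the standard and correct way to close this gap, and your use of stationarity of the pair to reduce to $t=0$ is fine. The one caveat --- which you yourself flag --- is that your first and third terms require exponential attraction to the singleton attractor at a rate and with constants uniform in $\varepsilon$, in an $L^p$ sense. The paper's Lemma 7 supplies only a singleton weak attractor (attraction in probability, and for frozen fast variable $y$), and Hypotheses \textbf{H.1}--\textbf{H.3} contain no explicit dissipativity of $f+g$ in the slow variable, so this uniform rate is an additional input that neither you nor the paper actually proves; your sketch of how to obtain it (writing the slow drift as $\frac12[f(X^{\varepsilon})+g(X^{\varepsilon})]$ plus an $O(\varepsilon^{1/\alpha})$ remainder controlled by Lemma 1) is the right idea but would need the missing dissipativity hypothesis to be completed. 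Net assessment: your argument identifies and repairs a real gap in the paper's proof, at the cost of an attraction estimate that the paper's stated hypotheses do not quite deliver.
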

\begin{proof}
By the definition of stationary stochastic solution,  we know $\hat{\eta}^{\varepsilon}_t$ and $ \bar{\eta}_t$ satisfy the following equations, i.e.,
\begin{equation}
d\hat{\eta}^{\varepsilon}_t=\frac{1}{2}\left[F\left(\hat{\eta}^{\varepsilon}_t, Y^{\varepsilon}_t, \varepsilon)
+G(\hat{\eta}^{\varepsilon}_t,Y^{\varepsilon}_t, \varepsilon\right)\right]dt+\frac{\sigma_1+\sigma_2}{2}dL^{\alpha}_t,
\end{equation}
and
\begin{equation}
d\bar{\eta}_t=\frac{1}{2}\bar{F}(\bar{\eta}_t)dt+\frac{1}{2}\bar{G}(\bar{\eta}_t)dt+\frac{(\sigma_1+\sigma_2)}{2}dL^{\alpha}_t.
\end{equation}
By the similar method as Theorem 1, we have
\begin{equation}
\lim_{\varepsilon\rightarrow 0}\mathbb{E}|\hat{\eta}^{\varepsilon}_t-\bar{\eta}_t|^{p}=0.
\end{equation}
\end{proof}

\begin{lemma}
\label{sta2}
Under Hypotheses $\bf{H.1}$-$\bf{H.{3}}$, for fixed $X^{\varepsilon}_{t}=x$, there exists a positive constant $M$ independent of the scale parameter $\varepsilon$,  such that the stationary stochastic solution $\chi^{\varepsilon}_t$ of $Y^{\varepsilon}_t$ in \eqref{sloy} is uniformly bounded, i.e.,
\begin{equation}
\mathbb{E}|\chi^{\varepsilon}_t|^{p}\leq M.
\end{equation}
\begin{proof}
By the definition of random dynamical system, we have
\begin{equation}
\chi^{\varepsilon}_t=\chi^{\varepsilon}(\theta_t\omega)=\phi(t, \omega, \chi^{\varepsilon}\omega).
\end{equation}
By Lemma 2, we yields
\begin{equation}
\mathbb{E}|\chi^{\varepsilon}_t|^{p}\leq M.
\end{equation}
\end{proof}
\end{lemma}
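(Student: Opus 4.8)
The plan is to identify the one-time marginal law of the stationary fast solution $\chi^{\varepsilon}_{t}$ with the invariant measure $\mu^{\varepsilon}_{x}$ of the frozen-slow fast equation, and then to quote the uniform-in-$\varepsilon$ moment estimate for $\mu^{\varepsilon}_{x}$ that was already produced inside the proof of Lemma \ref{avv}. First, from the stationarity identity $\chi^{\varepsilon}_{t}(\omega)=\phi(t,\omega,\chi^{\varepsilon}(\omega))=\chi^{\varepsilon}(\theta_{t}\omega)$ and the fact that $(\theta_{t})$ preserves $\mathbb{P}$, the law of $\chi^{\varepsilon}_{t}$ is independent of $t$. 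Since $\chi^{\varepsilon}$ is the unique stationary solution of the frozen equation \eqref{c0} with $X^{\varepsilon}_{t}\equiv x$, this common law must be the unique invariant measure $\mu^{\varepsilon}_{x}$ of the fast process $Y^{\varepsilon}_{\cdot,x}$. Hence $\mathbb{E}|\chi^{\varepsilon}_{t}|^{p}=\int_{\mathbb{R}^{n}}|y|^{p}\,\mu^{\varepsilon}_{x}(dy)$ for every $t\ge 0$ and every $\varepsilon$.

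It therefore remains to bound $\int_{\mathbb{R}^{n}}|y|^{p}\mu^{\varepsilon}_{x}(dy)$ uniformly in $\varepsilon$, which is exactly what the computation in the proof of Lemma \ref{avv} delivers: using Lemma \ref{bounded} and the contraction estimate \eqref{c3}, one has $\big(\int_{\mathbb{R}^{n}}|z|^{p}\mu^{\varepsilon}_{x}(dz)\big)^{1/p}\le C_{T}\big(1+e^{-2t/\varepsilon}|y|\big)$ for all $t$, and letting $t\to\infty$ yields $\int_{\mathbb{R}^{n}}|y|^{p}\mu^{\varepsilon}_{x}(dy)\le C_{T}^{p}$ with $C_{T}$ independent of $\varepsilon\in(0,\varepsilon_{0})$; taking $T=1$ and $M:=C_{1}^{p}$ finishes the argument. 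A self-contained alternative, avoiding the marginal-law identification, is to use the pullback representation of $\chi^{\varepsilon}_{t}$ obtained from \eqref{c0} by sending the initial time to $-\infty$, namely $\chi^{\varepsilon}_{t}=\frac{1}{\varepsilon}\int_{-\infty}^{t}e^{-\frac{2}{\varepsilon}(t-r)}\big[\frac{1}{2}\varepsilon^{1-\frac{1}{\alpha}}(F(x,\chi^{\varepsilon}_{r},\varepsilon)-G(x,\chi^{\varepsilon}_{r},\varepsilon))\big]dr+\frac{\sigma_{1}-\sigma_{2}}{2\varepsilon^{1/\alpha}}\int_{-\infty}^{t}e^{-\frac{2}{\varepsilon}(t-r)}dL^{\alpha}_{r}$, to take $L^{p}$-norms via Minkowski, to bound the drift by Hypothesis $\bf{H.3}$ together with $\frac{1}{\varepsilon}\int_{-\infty}^{t}e^{-\frac{2}{\varepsilon}(t-r)}dr=\frac{1}{2}$, to bound the jump integral by a constant $C_{\alpha,p}$ using the self-similarity $L^{\alpha}_{\varepsilon\cdot}\stackrel{d}{=}\varepsilon^{1/\alpha}L^{\alpha}_{\cdot}$ and stationary increments (as in \eqref{intro}), and then to close a Gr\"onwall-type inequality for $A^{\varepsilon}:=\sup_{t}(\mathbb{E}|\chi^{\varepsilon}_{t}|^{p})^{1/p}$ (finite by Lemma \ref{bounded}), whose self-coupling coefficient is $O(\varepsilon^{1-1/\alpha})\to 0$, so that a bound independent of $\varepsilon$ is produced for small $\varepsilon$.

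The main obstacle is precisely uniformity in $\varepsilon$: the fast drift carries a prefactor $1/\varepsilon$ and the noise intensity a prefactor $\varepsilon^{-1/\alpha}$, both of which blow up as $\varepsilon\to 0$. The argument works only because these are compensated by the relaxation kernel $e^{-2(t-r)/\varepsilon}$, whose total mass is $\varepsilon/2$, and, for the $\alpha$-stable integral, by the scaling self-similarity of $L^{\alpha}_{t}$, which turns $\varepsilon^{-1/\alpha}\int_{-\infty}^{t}e^{-\frac{2}{\varepsilon}(t-r)}dL^{\alpha}_{r}$ into an $\varepsilon$-free object with finite $p$-th moment for $1<p<\alpha$; the hypothesis $\alpha>1$ is also what makes $\varepsilon^{1-1/\alpha}\to 0$, so that the residual self-coupling term is absorbed for small $\varepsilon$. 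A secondary technical point is the rigorous justification of the $-\infty$ pullback representation (respectively, of the identity $\mathrm{Law}(\chi^{\varepsilon}_{t})=\mu^{\varepsilon}_{x}$), which rests on uniqueness of the stationary solution of \eqref{c0} together with stationarity of the driving noise; the remaining estimates are routine Minkowski and Gr\"onwall computations.
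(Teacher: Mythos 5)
Your proposal is correct and takes essentially the same route as the paper: the paper's two-line proof likewise invokes the stationarity identity $\chi^{\varepsilon}_t=\chi^{\varepsilon}(\theta_t\omega)$ and then appeals to the earlier a priori moment bounds, which is precisely your identification of the law of $\chi^{\varepsilon}_t$ with $\mu^{\varepsilon}_{x}$ followed by the uniform-in-$\varepsilon$ estimate $\int|y|^{p}\mu^{\varepsilon}_{x}(dy)\leq C^{p}$ extracted in the proof of Lemma \ref{avv}. Your version is in fact more careful than the paper's (which does not explain how the dependence on the initial datum $y_0$ is removed), and the pullback representation you sketch as an alternative is a sound, self-contained variant of the same estimate.
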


The aim is to show that this synchronization effect is preserved under additive $\alpha$-stable noise provided equilibria are replaced by stationary random solutions, and obtain the convergence rate of synchronization of the coupled systems.

\begin{theorem}
\label{sty}
Under Hypotheses $\bf{H.1}$-$\bf{H.{3}}$, for any given time intervals $[T_1,T_2]$ of $\mathbb{R}$, the stochastic dynamical system \eqref{syn} has a unique stationary stochastic solution $(\bar{X}_t^{\nu},\bar{Y}_t^{\nu})$, which is  globally asymptotically stable with
\begin{equation}
\lim_{\nu \rightarrow \infty}\mathbb{E}\big[|\bar{X}_t^{\nu}-\hat{X}_t|^{p}+|\bar{Y}_t^{\nu}-\hat{X}_t|^{p}\big]=0,
\end{equation}
 where $\hat{X}_t$  is the unique  globally asymptotically stable stationary stochastic solution of the
 following `averaged' SDE
 \begin{equation}
 d\bar{X}_t=\frac{1}{2}f(\bar{X}_t)dt+\frac{1}{2}g(\bar{X}_t)dt+\frac{(\sigma_1+\sigma_2)}{2}dL^{\alpha}_t.
 \end{equation}
\end{theorem}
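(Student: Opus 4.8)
The plan is to assemble Theorem~\ref{sty} from the machinery already developed: the change of variables \eqref{transmation}, the averaging principle (Theorem~1 together with Lemma~\ref{sta1}), and the uniform bounds on the fast stationary solution (Lemma~\ref{sta2}). First I would recall that, under the transformation $X_t = X_t^\varepsilon + \varepsilon^{1/\alpha}Y_t^\varepsilon$, $Y_t = X_t^\varepsilon - \varepsilon^{1/\alpha}Y_t^\varepsilon$, $\nu = 1/\varepsilon$, the synchronized system \eqref{syn} is \emph{equivalent} to the slow--fast system \eqref{slow1}. Existence and uniqueness of solutions follow from Hypothesis $\mathbf{H.1}$ (Remark~1), and the existence of a unique stationary stochastic solution $(\bar X_t^\nu, \bar Y_t^\nu)$ of \eqref{syn} is obtained by transferring, through \eqref{transmation}, the stationary solutions $\hat\eta_t^\varepsilon$ of the slow component $X_t^\varepsilon$ (Lemma~11 and the remark following it) and $\chi_t^\varepsilon$ of the fast component $Y_t^\varepsilon$ (Lemma~\ref{sta2}). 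Concretely I would set $\bar X_t^\nu := \hat\eta_t^\varepsilon + \varepsilon^{1/\alpha}\chi_t^\varepsilon$ and $\bar Y_t^\nu := \hat\eta_t^\varepsilon - \varepsilon^{1/\alpha}\chi_t^\varepsilon$, note that this pair is again a stationary process since the transformation is time-independent, and check uniqueness by running the same argument backward: any stationary solution of \eqref{syn} gives, via the inverse of \eqref{transmation}, a stationary solution of \eqref{slow1}, which must coincide with $(\hat\eta_t^\varepsilon,\chi_t^\varepsilon)$.

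Next I would establish the convergence. Using $\nu\to\infty \Leftrightarrow \varepsilon\to 0$ and the triangle inequality,
\begin{equation}
\big(\mathbb{E}|\bar X_t^\nu - \hat X_t|^{p}\big)^{1/p} \leq \big(\mathbb{E}|\hat\eta_t^\varepsilon - \bar\eta_t|^{p}\big)^{1/p} + \varepsilon^{1/\alpha}\big(\mathbb{E}|\chi_t^\varepsilon|^{p}\big)^{1/p},
\end{equation}
and likewise for $\bar Y_t^\nu$, where $\hat X_t = \bar\eta_t$ is the (unique, globally asymptotically stable) stationary stochastic solution of the averaged SDE. The first term tends to $0$ by Lemma~\ref{sta1}, and the second is bounded by $\varepsilon^{1/\alpha}M^{1/p}\to 0$ by Lemma~\ref{sta2}, since $M$ is independent of $\varepsilon$. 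Summing the two $p$-th moment bounds gives $\lim_{\nu\to\infty}\mathbb{E}\big[|\bar X_t^\nu - \hat X_t|^{p} + |\bar Y_t^\nu - \hat X_t|^{p}\big] = 0$. Note this identifies the averaged drift: because $f,g$ in \eqref{sde} do not depend on the fast variable after freezing, $F(x,y,\varepsilon)=f(x+\varepsilon^{1/\alpha}y)\to f(x)$ and $G(x,y,\varepsilon)=g(x-\varepsilon^{1/\alpha}y)\to g(x)$ as $\varepsilon\to0$, so $\bar F(x)=f(x)$ and $\bar G(x)=g(x)$, matching the stated averaged equation $d\bar X_t=\tfrac12 f(\bar X_t)dt+\tfrac12 g(\bar X_t)dt+\tfrac{\sigma_1+\sigma_2}{2}dL^\alpha_t$.

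Finally, global asymptotic stability of $(\bar X_t^\nu,\bar Y_t^\nu)$ for fixed $\nu$ should be argued directly from the synchronized system \eqref{syn}: writing the difference $\Delta_t = (X_t,Y_t) - (\bar X_t^\nu,\bar Y_t^\nu)$ of any two solutions, the $\alpha$-stable noise cancels (additive noise), and one estimates $\frac{d}{dt}|\Delta_t|^{p}$ (or $|\Delta_t|^2$ and then interpolates) using the global Lipschitz bound from Hypothesis $\mathbf{H.1}$ together with the dissipative coupling: the coupling terms contribute $-2\nu|X_t - Y_t - (\bar X_t^\nu - \bar Y_t^\nu)|^2$-type negativity, which for $\nu$ large dominates the Lipschitz growth $2L$ and forces exponential decay of $\Delta_t$; this is the same mechanism underlying Lemma~10 (the random attractor being a singleton) applied after the change of variables, so one may also simply invoke that the random attractor of \eqref{syn} is the singleton $\{(\bar X_t^\nu,\bar Y_t^\nu)\}$. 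I expect the main obstacle to be the \emph{uniformity in $\nu$} of the passage to the limit — specifically ensuring that the stationary fast solution $\chi_t^\varepsilon$ has $p$-th moment bounded independently of $\varepsilon$ (Lemma~\ref{sta2}) and that the error in Lemma~\ref{sta1} is genuinely uniform on the compact interval $[T_1,T_2]$; the delicate point there, already flagged in the Remark after Theorem~1, is that the distributional identity \eqref{dis} underlying the averaging estimate is restricted to the finite-dimensional setting, but since here all processes are $\mathbb{R}^n$-valued this causes no trouble.
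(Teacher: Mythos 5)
Your proposal follows essentially the same route as the paper's proof: invert the transformation \eqref{transmation} to reduce the claim to convergence of the stationary slow component (Lemma~\ref{sta1}) plus the $\varepsilon$-uniform $L^{p}$-bound on the stationary fast component (Lemma~\ref{sta2}), and conclude via the triangle inequality as $\varepsilon=1/\nu\to 0$. The paper's own argument is in fact terser --- it merely cites those two lemmas --- so your explicit decomposition $\bar X_t^{\nu}=\hat\eta_t^{\varepsilon}+\varepsilon^{1/\alpha}\chi_t^{\varepsilon}$, $\bar Y_t^{\nu}=\hat\eta_t^{\varepsilon}-\varepsilon^{1/\alpha}\chi_t^{\varepsilon}$ and the identification $\bar F=f$, $\bar G=g$ usefully spell out steps the paper leaves implicit.
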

\begin{proof}
By the relationship between $(X_t, Y_t)$ and $(X^{\varepsilon}_t,Y^{\varepsilon}_t)$, we have
\begin{equation}
\left\{
\begin{aligned}
& X^{\varepsilon}_{t}=\frac{X_t+ Y_t}{2}, \\
& Y^{\varepsilon}_{t}=\frac{X_t-Y_t}{2\varepsilon^{\frac{1}{\alpha}}}.
\end{aligned}
\right.
\end{equation}
Therefore we only need to show the stationary stochastic solution
$\eta^{\varepsilon}_t$ of $X^{\varepsilon}_t$ converges to $\hat{X}_t$ with $\varepsilon\rightarrow 0$, and the stationary solution $\chi^{\varepsilon}_t$ of $Y^{\varepsilon}_t$ is uniformly bounded.
By Lemma \ref{sta1} and Lemma \ref{sta2}, we yields the above assertion.
\end{proof}
\section{Conclusion}
In this paper, we study the synchronization problem of nonlinear coupled dynamical systems driven by $\alpha$-stable noise. By introducing a new transformation, we construct the relationship between synchronized system and slow-fast system. Using the stochastic averaging method, we show that the synchronization effect is persisted provided equilibria are replaced by stationary random solutions. Furthermore, we will consider  the persistence of synchronization  phenomenon under observation data. This could potentially find applications in biology, physics and social science. It is also possible to discuss the connection between the  persistence of synchronization  and data assimilation. These topics are being studied and will be reported in future works.

\medskip
\textbf{Acknowledgements}.  We would like to thank Jicheng Liu (Huazhong University of Sciences and Technology, China) for helpful discussions.
 The research of Y. Zhang was supported by the NSFC grant 11901202.  The research of J. Duan was partly supported by the NSFC grant 11531006 and 11771449. The research of H. Fu was partly supported by the NSFC grant 11826209.

\section*{Reference}

\end{document}